\newcommand{\tr}{\mathrm}
\newcommand{\wt}{\widetilde}
\newcommand{\ith}{$i$-th~}
\newcommand{\RR}{\mathbb{R}}
\newcommand{\NN}{\mathbb{N}}
\newcommand{\mc}{\mathcal}
\newcommand{\tran}{^\tr{T}}
\newcommand{\eqineq}{\mathrel{\overset{\makebox[0pt]{\mbox{\normalfont\small $(\leq)$}}}{=}}}
\newcommand{\nr}{N_{i}}
\newcommand{\Cui}{C_{i}^{\tr{u}}}
\newcommand{\Cci}{C_{i}^{\tr{c}}}
\newcommand{\Cpi}{C_{i}^{\tr{p}}}
\newcommand{\Qppi}{Q_{\tr{pp},i}}
\newcommand{\Quui}{Q_{\tr{uu},i}}
\newcommand{\Qpui}{Q_{\tr{pu},i}}
\newcommand{\params}{\left[\Phi_i\tran \; \Theta_i\tran \right]}
\newcommand{\tLs}{\lambda_\tr{L}^\star}
\newcommand{\tUs}{\lambda_\tr{U}^\star}
\newcommand{\tL}{\lambda_\tr{L}}
\newcommand{\tU}{\lambda_\tr{U}}
\newcommand{\til}{\lambda_i^\tr{l}}
\newcommand{\tiu}{\lambda_i^\tr{u}}
\newcommand{\etae}{\eta_{\tr{e},i}}
\newtheorem{theorem}{Theorem}
\newtheorem{lemma}{Lemma}
\newtheorem{remark}{Remark}
\newtheorem{assumption}{Assumption}
\newtheorem{proposition}{Proposition}
\DeclareMathOperator{\med}{median}
\DeclareMathOperator{\sign}{sign}
\DeclareMathOperator{\diag}{diag}
\begin{document}
%
\title{
Parametric Optimization Based MPC for Systems of Systems with Affine Coordination Constraints 
}
%
%
%

\author{Branimir~Novoselnik,~
		Vedrana~Spudi\'{c},~
        and~Mato~Baoti\'{c}
\thanks{B. Novoselnik and M. Baoti\'{c} are with University of Zagreb, Faculty of Electrical Engineering and Computing, Unska 3, HR-10000 Zagreb, Croatia, e-mail: \texttt{\{branimir.novoselnik, mato.baotic\}@fer.hr}.}
\thanks{V. Spudi\'{c} is with ABB Corporate Research Center, Segelhofstrasse 1k, CH-5404 Baden-D\"{a}ttwil, Switzerland, e-mail: \texttt{vedrana.spudic@ch.abb.com}}
\thanks{This research has been supported by the European Commission's FP7-ICT project DYMASOS (contract no. 611281), by the Croatian Science Foundation (contract no. I-3473-2014), and by the European Regional Development Fund under the grant KK.01.1.1.01.0009 (DATACROSS).}
}

\maketitle

\begin{abstract}
A large-scale complex system comprising many, often spatially distributed, dynamical subsystems with partial autonomy and complex interactions are called system of systems. This paper describes an efficient algorithm for model predictive control of a class of system of systems for which the overall objective function is the sum of convex quadratic cost functions of (locally) constrained linear subsystems that are coupled through a set of (global) linear constraints on the subsystems coordination parameters. 

The proposed control algorithm is based on parametrization and splitting of the underlying optimization problem into one global coordination problem and a set of local optimization problems pertaining to individual subsystems. The local optimization problems are solved off-line, via parametric optimization, while the coordination problem is solved on-line. The properties of the local parametric solutions are utilized to solve the coordination problem very efficiently. In particular, it is shown that, for a fixed number of coupling constraints, the coordination problem can be solved with a linear-time algorithm in a finite number of iterations if all subsystems have one-dimensional coordination parameters.
\end{abstract}

\begin{IEEEkeywords}
Model predictive control, parametric optimization, system of systems, distributed management, coordinated control.
\end{IEEEkeywords}

%
\IEEEpeerreviewmaketitle

\section{Introduction}

\IEEEPARstart{M}{any} socio--technical systems consist of a large number of partly autonomous subsystems with local self‐management that are coupled by physical interactions via streams of energy (electricity, steam) or material (water, gas, intermediates), forming so-called System of Systems (SoS) \cite{Engell2015,Kopetz2015,cpsos}. Examples of such systems are the electrical grid, buildings and building complexes, petrochemical and chemical production plants, water distribution systems, and gas networks. There is inherently a conflict between the local optimization performed by individual subsystems and the goals of the overarching SoS or of society as a whole. For example, in the electrical distribution grid, the local consumers want maximum comfort, guaranteed power supply and a low cost of electricity, the grid operator wants to maintain grid stability and high revenues by cheap generation, low transmission losses and high sales prices, and society wants to minimize the carbon footprint. 

Constituent subsystems and their interactions, resources and goals clearly need to be coordinated if common (global) desired outcomes are to be achieved. Moreover, the dynamic interaction of locally managed subsystems gives rise to complex dynamic behavior of the overall SoS. Hence, the lack of proper coordination in SoS can easily lead to large-scale disruptions, e.g. black-outs in the electrical grid. 

The coordination of SoS can often be formulated as a Model Predictive Control (MPC) problem. The main idea of the discrete-time MPC is to forecast system behavior, as a function of control inputs, by using a dynamic model of the system that starts from a known (measured or estimated) initial state. For a chosen cost function on a (finite) prediction horizon and prescribed state/input constraints, the MPC computes the optimal sequence of control inputs. Only the first element of the optimal sequence is applied to the system and the entire procedure is repeated at the next sampling instant, starting from a new initial state. For more details on the theory of MPC the reader is referred to \cite{MPCbook}. 

For practical applicability of the MPC it is vitally important to be able to solve the underlying optimization problem at every time instant, i.e., within the sampling time. There are basically two approaches to ensure this: i) the (off-line) computation of the optimal control law, or ii) the use of fast/tailored (on-line) constrained optimization algorithms. 

The first approach -- the so-called explicit MPC -- employs multi-parametric programming to compute the solution to the underlying optimization problem as an explicit function of the initial state. Such a function is precomputed off-line for all possible values of the initial state. Hence, the on-line computation reduces to a simple function evaluation \cite{Bemporad2002}. The main drawback of explicit MPC is that the complexity of the off-line solution can grow exponentially with the number of constraints in the control problem formulation. In practice, this means that explicit MPC is applicable only for systems with small number of states, control inputs and/or constraints.

The second approach consists of solving the underlying MPC optimization problem (in most cases a linear or a quadratic program) on-line at every sampling instant. Unfortunately, the general purpose solvers are of limited use when one has to solve computationally demanding optimization problem, as is the case in optimal coordination of complex SoS. The optimization problems arising in MPC algorithms often have specific structure that can be exploited to tailor the optimization algorithm and obtain the solution efficiently. Several methods are proposed in the literature for efficient interior point methods tailored to convex multistage problems arising in MPC applications, c.f. \cite{Wang2010,Mattingley2010,Domahidi2012,FORCESPro}. However, a well-known drawback of these methods is their limited warm start capability. A tailored active set strategy for the fast solution of quadratic programs (QPs) arising in MPC was proposed in \cite{Ferreau2008,Ferreau2014}. Although this method fully exploits the knowledge of similarity between the solutions of subsequently solved QPs, it does not benefit from the problem sparsity as much as interior point methods do. A dual Newton strategy that builds on ideas of interior point methods but still features warm start capabilities of active set methods is proposed in \cite{Frasch2015} for solving strictly convex QPs in the MPC setting. 

Although the aforementioned, specifically tailored MPC algorithms are very efficient, they all assume that the control problem can be solved in a centralized fashion. Unfortunately, this is not true in general, especially in the case of complex SoS. In many instances the central coordinator does not have full information about the local variables and constraints, e.g. because of privacy issues, different ownerships, conflicting economic goals, or management structures. In such cases fully centralized coordination based on the MPC approach is inapplicable \cite{cpsos}. The state-of-the-art approaches found in literature usually deal with this problem by decentralization of the computation -- they decompose the original, large optimal control problem into a number of smaller and more tractable subproblems that can be solved in parallel (i.e., independently). The overview of different approaches for distributed MPC based on various distributed optimization methods can be found in survey papers, e.g. \cite{Camponogara2002,Christofides2013,Negenborn2014}. Different techniques can be found in the literature for distributed optimization but most of these methods heavily exploit the concepts of convexity and duality, i.e., the original problem is often decoupled by introducing some dual variables to relax coupling constraints. The problem is then to find the optimal dual variables by maximizing the dual function. Typically, an iterative (global) aggregation/(local) optimization procedure is employed to find the globally optimal solution. The downside of these distributed optimization approaches is that they usually require a great number of calculation/communication iterations to converge to the solution \cite{Christofides2013,Boyd2011}.  

This paper describes a different, centralized MPC approach that combines both the on-line and off-line computation for coordinated control of SoS. The two main goals of the proposed method are: i) to mimic favorable properties of classical distributed optimization methods so that it is applicable in an SoS framework (e.g. the protection of data privacy of individual subsystems) and (ii) to have an efficient and scalable on-line computation algorithm that can easily be applied to large SoS. A considerable amount of the computational effort is transferred to the off-line procedure thus allowing for more efficient on-line computation. The overall control problem is decomposed into smaller, decoupled local problems related to individual subsystems and one coordination problem that describes the coupling of the subsystems. The contribution of each local subsystem to the overall cost function and its optimal control actions are determined off-line, as the solution to a multi-parametric Quadratic Program (mp-QP). This is also motivated by real-life examples of SoS, e.g. a power system where the grid coordinator needs to coordinate different generators in order to meet the demand for electrical power in the grid, while the generators declare their local behavior to the coordinator with the cost-of-generation functions. The globally optimal coordination parameters are determined on-line by solving the coordination problem based on the off-line solutions. There is no need for iterative exchange of information between the central coordinator and local subsystems to ensure the convergence to a globally optimal solution -- a limited amount of information (parts of local solution obtained off-line) is sent to the coordinator by each subsystem only once per sampling time. Thus, the central coordinator does not need to know everything about local subsystems in order to find the globally optimal solution. The optimal coordination problem has a specific structure that rests on the theoretic properties of multi-parametric solutions of local problems. The problem structure enables design of a linear-time on-line coordination algorithm that finds the solution in a finite number of iterations and therefore, by utilizing efficient primal decomposition of the optimization problem, allows the application of MPC to an SoS with a large number of subsystems. 

In \cite{Trnka2016} a similar idea of solving a distributed MPC with a combined explicit-iterative approach is described. Trnka et al. target a class of strictly convex quadratic optimization problems with linear constraints, where complicating (global) variables are coupled by equality constraints. Parametric solutions of local subproblems are used in each iteration to build the gradient and Hessian of a dual function and the global coordination problem (the search for dual function maximum) is solved by a damped Newton method. Although we focus on a less general formulation of the optimization problem where a dimension of global variable at each subsystem is equal to 1 (whereas in \cite{Trnka2016} an arbitrary dimension is allowed), in our case the global coordination problem exhibits a special structure that we exploit to develop an efficient linear-time algorithm that scales well with the overall size of the SoS.

This paper significantly extends on the previous work \cite{Spudic2013} in which a special case of a single coupling constraint was considered. An algorithm presented here can handle more coupling constraints. Furthermore, an in-depth description of the on-line coordination algorithm that runs in linear time is made, together with benchmark results from a numerical case study.

The main contributions of the paper are:

\begin{itemize}
	\item It is shown that, under the considered problem formulation, the global coordination problem can be reformulated as a (multiply constrained) continuous quadratic knapsack problem.
	
	\item We describe an efficient linear-time algorithm, denoted as the hyperplane searching (HPS) algorithm, that solves the global coordination problem in a finite number of iterations thus allowing for a superior scalability to large-scale SoS. The algorithm generalizes the breakpoint searching (BPS) algorithm to the case of multiple coupling constraints. 
	
	\item The developed control algorithm is tested on a power systems numerical case study that shows favorable computational properties of the proposed approach. Numerical results not only empirically confirm theoretical properties of the on-line algorithm (i.e. its linear-time complexity) but also highlight  its practical usefulness - drastic reductions in on-line computation time are achievable (up to two orders of magnitude smaller) compared to the classical centralized MPC.
\end{itemize}

The rest of this paper is organized as follows. In Section~\ref{sec:problem_setup} the control problem -- MPC for SoS -- is defined, while Section~\ref{sec:solution_method} describes the proposed solution method. In Section~\ref{sec:efficient_algorithm} a linear-time algorithm for the coordination problem is described. The efficiency of the proposed approach is tested on a numerical case study of a microgrid system in Section~\ref{sec:illustrative_example}, followed by the concluding remarks in Section~\ref{sec:conclusion}.

\section{Problem Setup}
\label{sec:problem_setup}

We consider coordinated MPC of an SoS comprising $M$ coupled, locally controllable subsystems described by linear time-invariant (LTI) dynamics
\begin{align}
\label{eq:sub_unit_model}
x_i(t+1) = A_i x_i(t) + B_i u_i(t),
\end{align}
where $t\in\mathbb{Z}$ denotes discrete time, $i\in\left\{1,\ldots,M\right\}$ is the subsystem index, $x_i(t)\in\mathbb{R}^{n_{\text{x},i}}$ and $u_i(t)\in\mathbb{R}^{n_{\text{u},i}}$ denote state and input of \ith subsystem at time $t$, respectively, while $A_i$ and $B_i$ are constant matrices of appropriate dimensions.


It is assumed that the control objective for every LTI subsystem \eqref{eq:sub_unit_model}, when considered in isolation, can be expressed as a local MPC problem with some finite prediction horizon, quadratic cost and linear constraints on the subsystems' states and inputs. Since every linear MPC problem can be formulated as a multi-parametric Quadratic Program (mp-QP), see \cite{Bemporad2002} for details, in the rest of the paper it is assumed, without loss of generality, that the local control problem has the form of the following mp-QP
\begin{subequations}
	\label{eq:local_problem}
	\begin{alignat}{2}
	J_i^\star(\Phi_i,\Theta_i) = \; & \underset{U_i}{\text{min}}
	& & \; J_i(\Phi_i,\Theta_i,{U}_i),\\
	& \;\text{s.t.} 
	\label{eq:local_constraints}
	& & \; \Cui {U}_i \leq \Cci + \Cpi \params\tran,
	\end{alignat}
with a convex quadratic cost function
\begin{align}
\label{eq:local_cost}
\begin{split}
J_i(\Phi_i,\Theta_i,{U}_i) &:= \params\Qppi\params\tran
+ {U}_i\tran\Quui{U}_i \\&+ \params\Qpui{U}_i,
\end{split}
\end{align}
\end{subequations}
where $U_i\in\mathbb{R}^{n_{\text{U},i}}$ is the vector of local optimization variables (e.g. control inputs on the prediction horizon), $\Phi_i\in\mathbb{R}^{n_{\Phi,i}}$ and $\Theta_i\in\mathbb{R}^{n_{\Theta,i}}$ are the parameters, $\Qppi=\Qppi\tran \succeq 0$, $\Quui = \Quui\tran \succ 0$ and $\Qpui$ are suitably sized cost matrices, while $\Cui$, $\Cci$ and $\Cpi$ are suitably sized constraint matrices. The properties of the optimizer ${U}_{i}^{\star}(\Phi_i,\Theta_i)$ and the value function $J_{i}^{\star}(\Phi_i,\Theta_i)$ are summarized in the following theorem.
\begin{theorem}[see \cite{Bemporad2002}]\label{th:mpqp_solutions}
Consider the mp-QP \eqref{eq:local_problem}. 
The set of feasible parameters
\begin{align}
\mc{P}_i:=\left\{ \params\tran \;\vert\; \exists {U}_i : \Cui {U}_i \leq \Cci + \Cpi \params\tran \right\}
\end{align}
is a polyhedral set, the value function $J_i^\star : \mc{P}_i \rightarrow \RR$ is a convex and continuous piecewise quadratic function on polyhedra (PPWQ), and the optimizer ${U}_i^\star : \mc{P}_i \rightarrow \RR^{n_{\emph{U},i}}$ is a continuous piecewise affine function on polyhedra (PPWA).
\end{theorem}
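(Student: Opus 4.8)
The plan is to reproduce the classical active-set argument for multi-parametric QPs. First I would note that $\mc{P}_i$ is the image of the polyhedron $\left\{\left(U_i,\params\tran\right) : \Cui U_i \leq \Cci + \Cpi\params\tran\right\}$ under the coordinate projection that deletes $U_i$; since projections of polyhedra are polyhedra (Fourier--Motzkin elimination), $\mc{P}_i$ is polyhedral. For the remaining claims, the key observation is that because $\Quui \succ 0$ the inner minimization is a strictly convex QP for every fixed value of the parameters, so its minimizer ${U}_i^\star\params$ exists, is unique, and is completely characterized by the KKT conditions. Convexity of $J_i^\star$ then follows from a standard convex-analysis fact: since the joint cost $J_i$ is convex in $\left(U_i,\params\tran\right)$ (which holds for problems of MPC origin and may be taken as a standing hypothesis), $J_i^\star$ is the partial minimization of a jointly convex function over a convex set, hence convex.

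Next I would expose the piecewise structure by enumerating active sets. Fix a candidate active set $\mc{A}$ of constraints and suppose the corresponding rows of $\Cui$ are linearly independent. Writing the stationarity condition for the Lagrangian of \eqref{eq:local_problem} together with the equality of the active constraints yields a square system of linear equations for $U_i$ and the associated multipliers whose right-hand side depends affinely on $\params\tran$; inverting it gives ${U}_i^\star$ and the multipliers as explicit affine functions of the parameters on the set where $\mc{A}$ is the optimal active set. That set --- the critical region associated with $\mc{A}$ --- is precisely where the candidate solution is primal feasible (the non-active inequalities hold) and dual feasible (the multipliers are nonnegative); after substituting the affine expressions, both requirements are systems of affine inequalities in $\params\tran$, so each critical region is a polyhedron. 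There are only finitely many active sets, hence finitely many critical regions, and they cover $\mc{P}_i$; on each of them ${U}_i^\star$ is affine, and substituting this affine map into the quadratic cost \eqref{eq:local_cost} shows $J_i^\star$ is quadratic there. This establishes that ${U}_i^\star$ is PPWA and $J_i^\star$ is PPWQ.

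It remains to argue continuity, and this is where I expect the main difficulty. For strictly convex parametric QPs the optimizer is single-valued, and by Berge's maximum theorem the optimal-solution correspondence is upper semicontinuous, hence continuous as a function; continuity of $J_i^\star$ follows. The genuinely delicate point is degeneracy: under primal or dual degeneracy several active sets may be simultaneously optimal, so a critical region can be lower-dimensional or two critical regions can overlap. One then has to check that the closures of the full-dimensional critical regions still cover $\mc{P}_i$, and that on any overlap the different affine formulas for ${U}_i^\star$ agree --- which they must, by uniqueness of the minimizer --- so that the piecewise descriptions of ${U}_i^\star$ and $J_i^\star$ are unambiguous and consistent with the continuity conclusion. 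Dealing cleanly with this bookkeeping, rather than the routine linear algebra of the KKT system, is the real content of the proof.
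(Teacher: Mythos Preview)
The paper does not prove this theorem at all: it is stated as a known result with a reference to \cite{Bemporad2002}, and no argument is given in the text. Your sketch is essentially the classical active-set/KKT argument from that reference (projection for polyhedrality of $\mc{P}_i$, affine dependence of the KKT solution on the parameters within each critical region, quadratic value function by substitution, and continuity via strict convexity/uniqueness), so there is nothing to compare against and your approach is the expected one.

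One minor caveat: convexity of $J_i^\star$ does not require the extra hypothesis that $J_i$ be jointly convex in $\left(U_i,\params\tran\right)$. Since $\Quui\succ 0$, the pointwise minimizer in $U_i$ is an affine function of the parameters on each critical region, and the resulting piecewise-quadratic $J_i^\star$ can be shown convex directly from the KKT sensitivity analysis (or, equivalently, by first completing the square in $U_i$ so that the reduced problem has a cost convex in $U_i$ alone plus a term depending only on the parameters, and then invoking the standard partial-minimization argument). Your ``standing hypothesis'' is therefore unnecessary, and in fact $\Qppi\succeq 0$ as assumed in \eqref{eq:local_problem} already suffices.
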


Note that the mp-QP \eqref{eq:local_problem} has two types of parameters: local parameters $\Phi_i$ and coordination parameters $\Theta_i$. The vector of local parameters $\Phi_i$ includes all data that will be locally available (obtained from measurements, estimations or predictions from historical data; e.g., initial state of the \ith subsystem) at the time instant when MPC problem has to be solved. The coordination parameters $\Theta_i$ describe the contribution of the \ith subsystem to the coupling constraints.


The coupling between constituent subsystems in the SoS is described by $m$ linear (in)equalities of the following form
\begin{align}
\label{eq:coupling_constraints}
\textstyle\sum\limits_{i=1}^{M} a_{i,j}\tran \Theta_i \eqineq b_j, \quad j=1,\ldots,m,
\end{align}
where 
$a_{i,j}\in\mathbb{R}^{n_{\Theta,i}}$ and $b_j\in\mathbb{R}$ are constant parameters. These kinds of constraints typically arise in resource allocation problems \cite{Stojanovski2015} and distributed production problems \cite{Kundur1994}. Note that inequality constraints can be treated exactly the same as equality constraints, e.g. by introduction of slack variables. Therefore, in the rest of the paper, without loss of generality, only equality coupling constraints in \eqref{eq:coupling_constraints} are considered. 

\subsection*{Coordination problem}

The coordinated SoS aims to minimize the cumulative cost of all subsystems while satisfying their local constraints and global coupling constraints, i.e., one needs to solve the following (centralized) coordination problem
\begin{subequations}
	\label{eq:coordination_problem}
	\begin{alignat}{2}
	\label{eq:coordination_cost}
	& \underset{\substack{{U}_1,\ldots,{U}_M\\\Theta_1,\ldots,\Theta_M}}{\text{min}}
	& & \; \textstyle\sum\limits_{i=1}^{M} J_i(\Phi_i,\Theta_i,{U}_i),\\
	&\hspace{0.46cm} \text{s.t.} 
	\label{eq:coordination_local_constraints}
	& & \; \Cui {U}_i \leq \Cci + \Cpi \params\tran, \; i=1,\ldots,M,\\
	\label{eq:coordination_coupling_constraints}
	& & & \; \textstyle\sum\limits_{i=1}^{M} a_{i,j}\tran \Theta_i = b_j, \quad j=1,\ldots,m.
	\end{alignat}
\end{subequations}
Note that $\Theta_i$ is treated as a parameter in the local control problem \eqref{eq:local_problem}, while in the coordination problem \eqref{eq:coordination_problem} it is one of the optimization variables.


The overall control algorithm -- MPC for SoS -- runs in a receding horizon fashion. The coordination problem \eqref{eq:coordination_problem} is solved at every time sample $t$ for fixed (measured or estimated) values of parameters $\Phi_1(t),\ldots,\Phi_M(t)$. Only the first element $u_{0,i}^\star$ of the optimizer $U_i^\star$ is applied to \ith subsystem using the control law $u_i(t) = u_{0,i}^\star$, $i=1,\ldots,M$, and the entire procedure is repeated at the next sampling instant. Although \eqref{eq:coordination_problem} is a quadratic program -- the global cost function \eqref{eq:coordination_cost} is a sum of convex quadratic local cost functions \eqref{eq:local_cost} -- that can be solved by a centralized controller, we aim to solve it more efficiently and in a manner that allows for protection of subsystems data.

{
	\begin{remark}
		Note that local control problems \eqref{eq:local_problem} are considered to be heterogeneous in terms of prediction horizon length, number of local constraints, number of subsystem states and control inputs. 
		
		For simplicity, however, it is assumed that all local subsystems have the same sampling time. Note that one could also handle the case with different sampling times -- provided they are all integer multiples of coordinator's sampling time $T_s$. In such case subsystems with sampling times larger than $T_s$ would also have to provide a prediction of their behavior between their respective samples. 
	\end{remark}
}

\begin{remark}
	\label{rem:electrical_grid_example}
The dynamic optimal dispatch problem in electrical grid operation and control \cite{Kundur1994} can be formulated as \eqref{eq:coordination_problem}. In that case $M$ subsystems are dispatchable generators, where $J_i(\Phi_i,\Theta_i,U_i)$ is the cost of power generation, $\Phi_i$ is initial state ($\Phi_i(t)=x_{i}(t)$), $U_i$ is the vector of control inputs on the prediction horizon, and $\Theta_i$ is the desired generated electrical power in steady state. The goal is to minimize the total cost of generation \eqref{eq:coordination_cost}, while satisfying local constraints pertaining to individual generators \eqref{eq:coordination_local_constraints} as well as power balance coupling constraints \eqref{eq:coordination_coupling_constraints} induced by the grid itself.
\end{remark}

\subsection*{Dimension of the coordination parameter}

If the coordination parameter $\Theta_i$ is allowed to have an arbitrary dimension, practically any control problem based on model predictive control of linear(ized) SoS with quadratic cost would be covered by this problem setup. When $n_{\Theta,i} > 1$ an iterative approach outlined in \cite{Spudic2011}  can be used to solve the coordination problem. Naturally, larger coordination parameters render more complex coordination problems and as the parameter size increases, this approach is expected to be less efficient. In this paper, however, we are focused on the case when $n_{\Theta,i}=1$ because in this case a very efficient on-line algorithm that runs in linear time and retrieves the globally optimal solution of \eqref{eq:coordination_problem} in a finite number of iterations can be constructed. The requirement that the dimension of the coordination parameter is equal to one is somewhat restrictive in terms of applicability to a general class of SoS, e.g. dynamically coupled systems. However, it allows very efficient solution methods. In the example of electrical grid from Remark~\ref{rem:electrical_grid_example}, the subsystems were coupled by flows of only one main product $\Theta_i$ -- electrical power. One could, in principle, introduce multiple secondary products to the coupling constraints \eqref{eq:coupling_constraints} if the secondary products were affine functions of the main product, i.e. the coupling constraints \eqref{eq:coupling_constraints} could still be expressed in terms of $\Theta_i$. One such example is the network of combined heat and power generators that can produce both the electrical power (main product) and heat (secondary product) from some primary energy source (e.g. natural gas). This example is further described in Sec. \ref{sec:illustrative_example}.

\section{Solution method}
\label{sec:solution_method}

Since \eqref{eq:coordination_problem} is a convex optimization problem 
it can be restated as follows
\begin{equation}
\label{eq:coordination_problem2}
	\begin{array}{cl}
	\underset{\Theta_1,\ldots,\Theta_M}{\text{min}} &\hspace{-0.2cm} \textstyle\sum\limits_{i=1}^{M}
	\left\{
	\begin{array}{cl}
	\hspace{-0.15cm}\underset{{U}_i}{\text{min}} &\hspace{-0.2cm} J_i(\Phi_i,\Theta_i,{U}_i)\\
	\hspace{-0.15cm}\text{s.t.} &\hspace{-0.2cm} \Cui {U}_i \leq \Cci + \Cpi \params\tran
	\end{array}
	\right.
	\\[3ex]
	\text{s.t.} &\hspace{-0.2cm} \textstyle\sum\limits_{i=1}^{M} a_{i,j}\tran \Theta_i = b_j, \quad j=1,\ldots,m.
	\end{array}
\end{equation}

From \eqref{eq:coordination_problem2} it is clear that the coordination of SoS, i.e., solution to \eqref{eq:coordination_problem}, can be achieved with a hierarchically structured controller, illustrated in Fig.~\ref{fig:on_line_steps}, with two levels of control:
\begin{itemize}
\item A set of \emph{local controllers} assigned to each subsystem at the bottom layer of the control hierarchy. Each localized computational unit has full knowledge of the local control problem \eqref{eq:local_problem} data/variables, but it has no knowledge about other subsystems.
\item The \emph{central coordinator}, which sits at the top of the controller hierarchy, is a centralized computational unit responsible for solving the coordination problem. It can communicate with all local controllers, but it does not have direct access to any of the subsystems. The central coordinator has full knowledge of the coupling constraints, but the knowledge of the local control problems is limited to the parts which the local controllers are willing to share.
\end{itemize}


With the hierarchical structure in Fig.~\ref{fig:on_line_steps} in mind, the method proposed in this paper for MPC of SoS combines off-line and on-line computation phases. Firstly, in the off-line phase, the local controllers solve their local control problems \eqref{eq:local_problem} parametrically, to obtain the optimizer ${U}_{i}^{\star}(\Phi_i,\Theta_i)$ and the value function $J_{i}^{\star}(\Phi_i,\Theta_i)$ as closed-form functions, which can be readily done, e.g. by solving the corresponding mp-QP with the MPT toolbox \cite{MPT3}.



In the on-line phase, at every time sample $t$, the computation is done in the following steps:
\begin{enumerate}
\item \textit{Local evaluation}: For all $i\in\{1,\ldots,M\}$, obtain measurement/estimation of local parameters $\hat{\Phi}_i$ and evaluate the local control problem solution for it
\begin{subequations}
\label{eq:evaluated_local_solutions}
\begin{align}
\wt{U}_{i}(\Theta_i) &:= {U}_{i}^{\star}(\hat{\Phi}_i,\Theta_i),\;\; \wt{U}_i : \mc{I}_i \rightarrow \RR^{n_{\text{U},i}},\\
\label{eq:evaluated_value_function}
\wt{J}_{i}(\Theta_i) &:= J_{i}^{\star}(\hat{\Phi}_i,\Theta_i),\;\; \wt{J}_{i} : \mc{I}_i \rightarrow  \RR,
\end{align}
\end{subequations}
where $\mc{I}_i$ is an interval in $\RR$, obtained by slicing $\mc{P}_i$ with $\Phi_i = \hat{\Phi}_i$, i.e., $\mc{I}_i=\left\{ \Theta_i \;\vert\; [\hat{\Phi}_i\tran,\Theta_i]\tran\in \mc{P}_i\right\}$.

\item \textit{Solve the coordination problem}: Given the evaluated local value functions \eqref{eq:evaluated_local_solutions}, solve the coordination problem \eqref{eq:coordination_problem} reformulated in the following equivalent form
\vspace{-0.2cm}
\begin{subequations}
\label{eq:evaluated_coordination_problem_formulation}
\begin{alignat}{2}
& \underset{\Theta_1,\ldots,\Theta_M}{\text{min}}
& & \; \textstyle\sum\limits_{i=1}^{M} \wt{J}_{i}(\Theta_i), \\
&\hspace{0.46cm} \text{s.t.} 
& & \;\Theta_i \in \mc{I}_i,\;\; i=1,\ldots,M,\\
& & & \; \textstyle\sum\limits_{i=1}^{M} a_{i,j} \Theta_i = b_j, \quad j=1,\ldots,m,
\end{alignat}
\end{subequations}
to obtain the optimal value of coordination parameters $\Theta_{i}^{\star}$, $i=1,\ldots,M$.

\item \textit{Local evaluation}: For all $i\in\{1,\ldots,M\}$, evaluate the optimal control input
\begin{equation}
{U}_{i}^{\star} = \wt{U}_{i}(\Theta_{i}^{\star}).
\end{equation}
and apply $u_i(t) = u_{0,i}^\star$ as a control input to the \ith subsystem. Note that $u_{0,i}^\star$ is either contained in ${U}_i^\star$ or can be straightforwardly computed from $\Theta_i^\star$ and ${U}_i^\star$.
\end{enumerate}
The overview of the on-line steps are illustrated in Fig.~\ref{fig:on_line_steps}. The local evaluations are performed in parallel on the local controllers, while the coordination problem is solved by the central coordinator. Notice the flow of information: the local controllers send the description of the value function $\wt{J}_i:\mc{I}_i \rightarrow \RR$ (up to $3 n_{\tr{r},i}+1$ numbers, where $n_{\tr{r},i}$ is the number of critical regions in the off-line solution of the \ith subsystem) to the central coordinator, while the central coordinator returns the value of the optimal coordination parameter $\Theta_{i}^{\star}$ (only one number per subsystem). We point out that $3 n_{\tr{r},i}+1$ is the size of $\wt{J}_i$ in the worst-case scenario. In practice, the number of intervals in $\wt{J}_i$ is expected to be much lower than the total number of regions $n_{\tr{r},i}$ in the local parametric solution since $\wt{J}_i$ is just a slice of the entire $J_i^\star$ with fixed local parameters $\hat{\Phi}_i$. This exchange of information is performed only once per time sample. Hence, there is no need for iterative communication between the central coordinator and local controllers in order to reach the global optimum, like with most of classical distributed optimization techniques which usually need a large number of such iterations (see e.g. \cite{Kozma2015}). Please note that our total communication effort (the amount of data transferred) can generally be as good (or bad) as other distributed optimization methods, if the number of intervals in the description of $\wt{J}_i$ is excessively high. However, we point out that this was not the case in our case study where only a small amount of data was transferred in each iteration. Any further analysis of the communication effort is out of scope of this paper.

\begin{figure}[!t]
\centering
\includegraphics[width=0.45\textwidth]{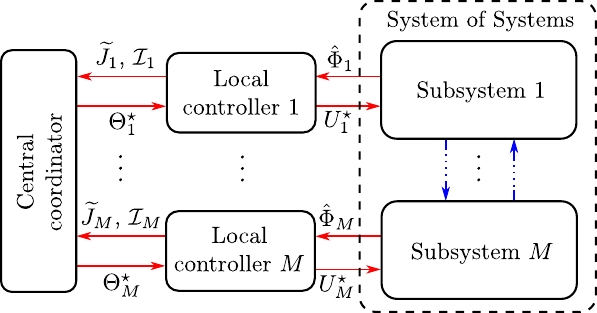}
\caption{The hierarchical controller structure for coordination of SoS. Red arrows indicate communication links and blue arrows indicate the coupling between individual subsystems.}
\label{fig:on_line_steps}\vspace{-0.5cm}
\end{figure}

The solution obtained in the on-line steps outlined above is globally optimal and it is exactly the same as the solution that would be obtained by solving \eqref{eq:coordination_problem}.

Moreover, notice that the local controllers share only limited information with the central coordinator. Potentially confidential data like subsystem matrices, constraints and cost functions are masked behind evaluated functions \eqref{eq:evaluated_value_function} which are scalar piecewise quadratic functions defined on intervals $\mc{I}_i$.

The local computation boils down to evaluation of PPWA and PPWQ functions, a relatively simple task that can be performed efficiently and thus introduces a negligible computational overhead. A real computational burden lies in solving \eqref{eq:evaluated_coordination_problem_formulation} so an efficient algorithm for \eqref{eq:evaluated_coordination_problem_formulation} is needed. 

\section{Efficient Coordination Algorithm}
\label{sec:efficient_algorithm}


\subsection{Reformulation of the coordination problem}
\label{subsec:reformulation}

Note that $\wt{J}_i$ defined in \eqref{eq:evaluated_value_function} is a scalar convex piecewise quadratic function defined on a closed interval $\mc{I}_i\subset\RR$, i.e.,
\begin{subequations}
		\label{eq:ppqw_fja}
	\begin{align}
	&\wt{J}_i(\Theta_i) = \textstyle\frac{1}{2} h_{i,r} \Theta_i^2 + f_{i,r}\Theta_i + g_{i,r} \ \mathrm{if}~ \Theta_i \in \left[ I_{i,r-1}, I_{i,r} \right],\\
	&\mc{I}_i = \bigcup_{r=1}^{\nr} \left\{ \Theta_i \;\vert\;  I_{i,r-1} \leq \Theta_i \leq I_{i,r} \right\} = \left[ I_{i,0}, I_{i,\nr} \right],
	\end{align}
\end{subequations}
where $\nr$ is the number of subintervals in partition of $\mc{I}_i$, scalars $h_{i,r}$, $f_{i,r}$ and $g_{i,r}$ are parameters of the quadratic function in $r$-th subinterval, while $I_{i,r}\in\mathbb{R}$ are the endpoints of those subintervals, $r=1,\ldots,\nr$, $i=1,\ldots,M$. Consequently, the cost function of \eqref{eq:evaluated_coordination_problem_formulation} is a PPWQ function defined on a Cartesian product of intervals, i.e. on hyperrectangles in $\RR^M$.

The optimization problem \eqref{eq:evaluated_coordination_problem_formulation} is historically known as a monotropic piecewise quadratic program (PQP) (cf. \cite{Sun1986,Sun1991,Sun1992}). PQP is found in many practical problems, especially those involving network structures, variable costs, stochastic factors, soft constraints but it can also arise as a subproblem in solving more complex mathematical programs \cite{Sun1991}. In \cite{Rockafellar1994} an iterative simplex-based algorithm that solves PQP directly has been developed. In our work we pursue a different indirect approach.

As is shown in Appendix\ref{appendix}, the optimization problem \eqref{eq:evaluated_coordination_problem_formulation} can be reformulated as a convex separable quadratic program with box constraints and coupling equality constraints that has the following general form
\begin{subequations}
\label{eq:separable_QP}
\begin{alignat}{2}
& \underset{x_1,\ldots,x_n}{\text{min}}
& & \quad \textstyle\sum\limits_{i=1}^{n} \frac{1}{2} d_i x_i^2 - a_i x_i, \\
& \hspace{0.35cm}\text{s.t.} 
& & \quad l_i \leq x_i \leq u_i, \; i=1,\ldots,n,\\
\label{eq:separable_QP_coupling}
& & & \quad B x = c, 
\end{alignat}
\end{subequations}
where $x=\left[x_1,\ldots,x_n\right]\tran\in\RR^n$ is the optimization variable, while
$d=\left[d_1,\ldots,d_n\right]\tran\in\RR^n$, $d_i\geq0$, $a=\left[a_1,\ldots,a_n\right]\tran\in\RR^n$, $l=\left[l_1,\ldots,l_n\right]\tran\in\RR^n$, $u=\left[u_1,\ldots,u_n\right]\tran\in\RR^n$, $l_i\leq u_i$, $B=[B_1,\ldots,B_n]\in\RR^{m\times n}$, and $c\in\RR^{m}$ are (known) parameters, with $m\in\NN$ being the number of coupling constraints in \eqref{eq:separable_QP_coupling}. The total number of variables $n$ in \eqref{eq:separable_QP} is equal to $\sum_{i=1}^{M} N_i$ (see Appendix\ref{appendix} for details).


\begin{remark}
In literature problem \eqref{eq:separable_QP} is called a \emph{continuous quadratic knapsack problem} (CQKP) when $m=1$ (single coupling constraints), and a \emph{multiply constrained continuous quadratic knapsack problem} (MCQKP) otherwise \cite{knapsackBook}.
\end{remark}

\subsection{Single coupling constraint}
\label{subsec:single_coupling_constraint}

There are two very efficient methods for solving \eqref{eq:separable_QP} when $m=1$ known in literature: the breakpoint searching (BPS) algorithm \cite{BRUCKER1984,Kiwiel2008} and the variable fixing (VF) algorithm \cite{Kiwiel2008vf}. The BPS algorithm solves \eqref{eq:separable_QP} in $\mc{O}(\log{n})$ iterations in time $\mc{O}(n)$, but it heavily depends on an efficient implementation of median searching algorithm. The VF algorithm has simpler implementation since it uses only elementary algebraic operations. On average the VF algorithm requires $\mc{O}(n)$ iterations and the worst-case performance is $\mc{O}(n^2)$. In practice both algorithms have similar average run times \cite{Kiwiel2008vf}. The focus in the rest of the paper is on the BPS algorithm since it can be nicely generalized to handle the case $m>1$. 

For $m=1$ the problem \eqref{eq:separable_QP} can be written compactly as
\begin{subequations}
\label{eq:knapsack}
\begin{alignat}{2}
& \underset{x}{\min}
& & \quad \textstyle\frac{1}{2} x\tran D x - a\tran x\\
& \hspace{0.15cm}\mathrm{s.t.} 
& & \quad l \leq x \leq u,\\
\label{eq:knapsack_coupling}
& & & \quad b\tran x = c, 
\end{alignat}
\end{subequations}
where $D=\diag\left( d \right)$, $b=\left[b_1,\ldots,b_n\right]\tran\in\RR^n$, $c\in\RR$. 
Let $x^*\in\RR^n$ denote a minimizer to \eqref{eq:knapsack}. 

Next we give a detailed description of the known BPS algorithm \cite{BRUCKER1984,Kiwiel2008} for solving \eqref{eq:knapsack}, with an extension (which, to the best of our knowledge, is not available in the literature) to handle the cases when some $d_i=0$. For simplicity, in the rest of this subsection the following assumption holds.
\begin{assumption}
\label{as:positiveb}
Problem \eqref{eq:knapsack} is feasible and $b > 0$.
\end{assumption}
Note that Assumption~\ref{as:positiveb} is non-restrictive. If some $b_i=0$ then $x_i$ does not contribute to the coupling constraint \eqref{eq:knapsack_coupling} and $x^*_i$ can be easily computed (e.g., $x^*_i=\mathrm{median}\{l_i, a_i/d_i, u_i\}$ if $d_i>0$) thus reducing dimension of the problem to $n-1$. If some $b_i < 0$, one can use variable substitution $\tilde{x}_i=-x_i$ (and solve the new problem for which $\tilde{b}_i>0$). Finally, for $b>0$, the problem \eqref{eq:knapsack} is feasible if and only if $b\tran l\leq c \leq b\tran u$.

The Lagrangian relaxation of \eqref{eq:knapsack} is
\begin{subequations}
\label{eq:knapsack_lagrange}
\begin{alignat}{2}
\phi(\lambda):=\quad & \underset{x}{\min}
& & \quad \textstyle\frac{1}{2} x\tran D x - a\tran x + \lambda(b\tran x-c) \\
& \hspace{0.15cm}\mathrm{s.t.} 
& & \quad l \leq x \leq u, 
\end{alignat}
\end{subequations}
where $\lambda\in\RR$ is a multiplier for the equality constraint \eqref{eq:knapsack_coupling}. It is well known that $\phi(\lambda)$ is concave and that maximizing $\phi(\lambda)$ is equivalent to solving \eqref{eq:knapsack} \cite{BoydBook}. For any given $\lambda$ it is easy to evaluate $\phi(\lambda)$ because it has a separable structure
\begin{align}\label{eq:phi_lam_bps}
\phi(\lambda) = \phi_1(\lambda)+\ldots+\phi_n(\lambda) - \lambda c,
\end{align}
\begin{align}
\label{eq:defphi_i}
\phi_i(\lambda) := \underset{x_i}{\text{min}}\left\{\textstyle \frac{1}{2} d_i x_i^2 + (\lambda b_i - a_i)x_i : l_i \leq x_i \leq u_i \right\}.
\end{align}
Let $x_i(\lambda)$, $i=1,\ldots,n$, denote a minimizer to \eqref{eq:defphi_i}, then solving \eqref{eq:knapsack} amounts to finding a multiplier $\lambda^*$ in the optimal dual set, \cite{Kiwiel2008},
\begin{align}
\label{eq:Lambdastar}
\Lambda^\star := \left\{ \lambda: b\tran x(\lambda)=c \right\}=\left[\tLs,\tUs\right]\subset\RR.
\end{align}
The BPS algorithm does this by updating $\lambda$ to a new value if $b\tran x(\lambda)\neq c$, while simultaneously improving the (over)estimate of $\Lambda^*$. To help describe the BPS algorithm it is useful to define a scalar function
\begin{align}
\label{eq:glambda}
g(\lambda):= b\tran x(\lambda),
\end{align}
which can be readily evaluated for any $\lambda$ since explicit expressions for $x_i(\lambda)$ are available:
\begin{align}
\label{eq:xilambda}
x_i(\lambda) = \begin{cases}
l_i & \mathrm{if}~ d_i=0 \wedge \lambda b_i > a_i,\\
\in\left[ l_i, u_i \right] & \mathrm{if}~ d_i=0 \wedge \lambda b_i = a_i,\\
u_i & \mathrm{if}~ d_i=0 \wedge \lambda b_i < a_i,\\
\mathrm{median}\{l_i, \frac{a_i-\lambda b_i}{d_i}, u_i\} & \mathrm{if}~ d_i>0.
\end{cases}
\end{align}
The situation when $(d_i=0) \wedge (\lambda b_i = a_i)$ needs to be treated carefully since in that case $x_i(\lambda)$ is not uniquely defined by \eqref{eq:xilambda}. To resolve this ambiguity one can find all such indices 
\begin{align}
&\mc{I}:=\left\{ i: d_i = 0,\; \lambda b_i = a_i \right\},
\end{align}
then compute 
\begin{align}
\label{eq:LUands}
\bar{L} = \textstyle\sum_{i\in \mc{I}} b_i l_i, \ \bar{U} = \textstyle\sum_{i\in \mc{I}} b_i u_i, \
s=\sum_{i\notin \mc{I}} b_i x_i(\lambda),
\end{align}
and evaluate $x_i(\lambda)$, $\forall i\in\mc{I}$, depending on which of the following conditions is met:
\begin{itemize}
\item[i)] $\bar{L}\leq c-s \leq \bar{U}$. The optimum has been found, i.e., $\lambda\in\Lambda^*$. With a straightforward inspection, using \eqref{eq:LUands}, one can confirm that the choice
\begin{align}
\label{eq:xilambdai}
\textstyle x_i(\lambda) = l_i + \frac{c-s-\bar{L}}{\bar{U}-\bar{L}} (u_i - l_i), \; \forall i \in \mc{I},
\end{align}
satisfies the coupling constraint $g(\lambda)=c$.
\item[ii)] $\bar{U} < c-s$. The optimum has not been found, $g(\lambda) < c$.
\begin{align}
\label{eq:xilambdaii}
x_i(\lambda) = u_i, \; \forall i \in \mc{I}.
\end{align}
\item[iii)] $\bar{L} > c-s$. The optimum has not been found, $g(\lambda) > c$. \begin{align}
\label{eq:xilambdaiii}
x_i(\lambda) = l_i, \; \forall i \in \mc{I}.
\end{align}
\end{itemize}

From \eqref{eq:glambda}--\eqref{eq:xilambdaiii} it follows that $g(\lambda)$ is a piecewise affine, non-increasing function of $\lambda$ that is either continuous (if all $d_i > 0$) or has a finite number of discontinuities (if some $d_i = 0$), see Fig. \ref{fig:function_g} for illustration. Clearly, $g(\lambda) > c$ if and only if $\lambda < \tLs$ and $g(\lambda) < c$ if and only if $\lambda > \tUs$. In general, $g(\lambda)$ has $2n$ breakpoints
\begin{align}
\label{eq:breakpoints}
\til := \frac{a_i - l_i d_i}{b_i}, \quad \tiu := \frac{a_i - u_i d_i}{b_i}, \quad i=1,\ldots,n.
\end{align}
at which it changes slope or makes a jump. Note that $\tiu \leq \til$ since $l_i \leq u_i$ and $b_i > 0$. If $d_i = 0$ then $\til = \tiu = a_i/b_i$.

\begin{figure*}[htbp]
\centering
\subfloat[Illustration of $x_i(\lambda)$, $b_i > 0$ and $d_i > 0$.]
{\includegraphics[width=2in]{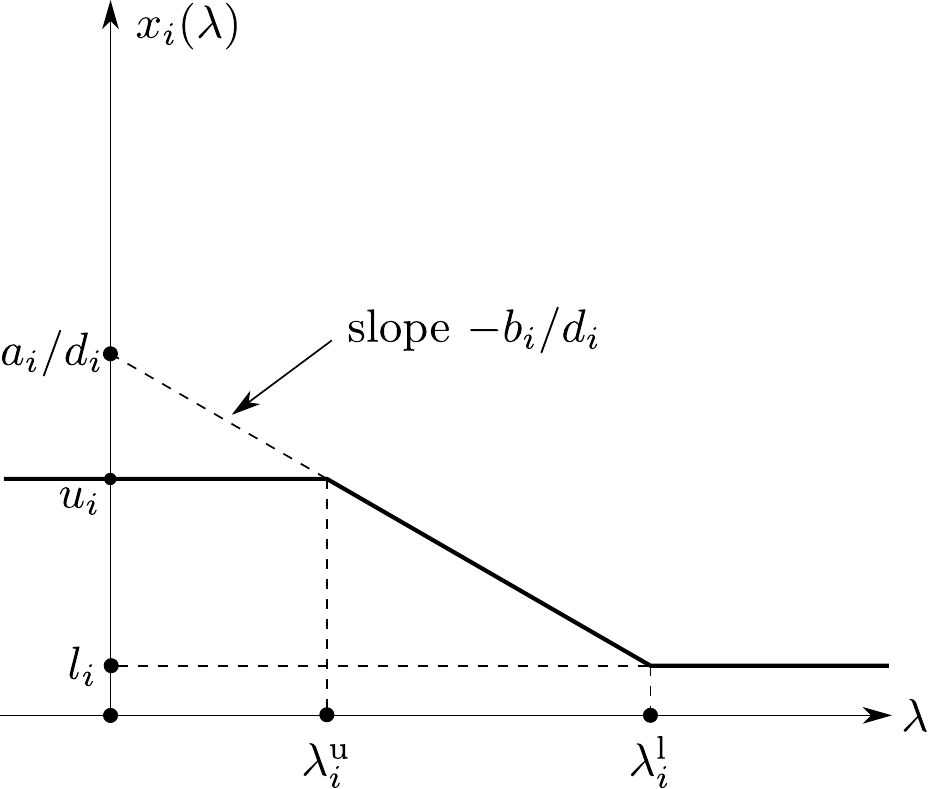}%
\label{fig:xodt}}
\hfil
\subfloat[Illustration of $x_i(\lambda)$, $b_i > 0$ and $d_i = 0$.]{\includegraphics[width=2in]{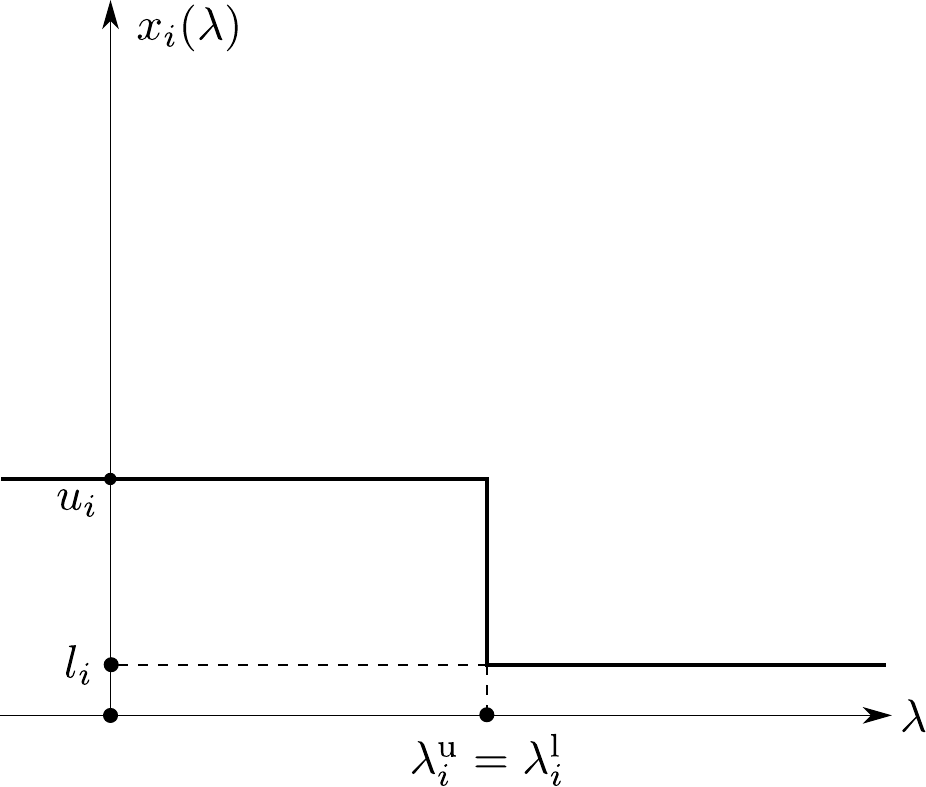}%
	\label{fig:xodt2}}
\hfil
\subfloat[Illustration of $g(\lambda)$ when some $d_i = 0$.]{\includegraphics[width=2in]{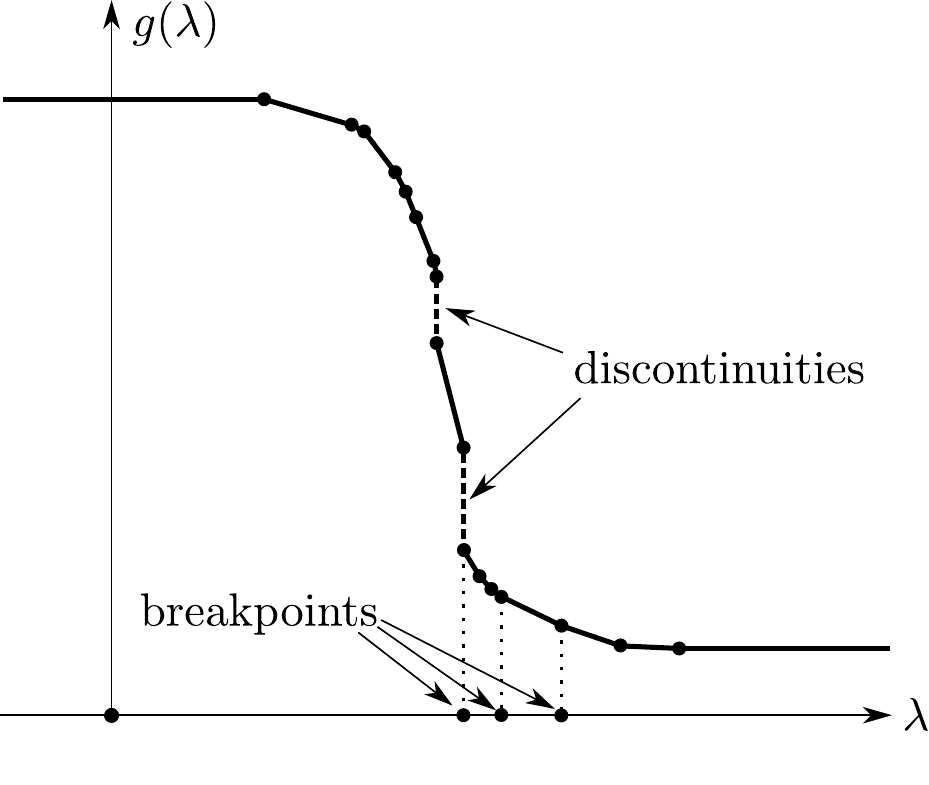}%
	\label{fig:funkcijag_disc}}
\caption{Illustration of $x_i(\lambda)$ and $g(\lambda) = b\tran x(\lambda)$.}
\label{fig:function_g}\vspace{-0.5cm}
\end{figure*}


\begin{center}
	\captionof{algorithm}{Breakpoint searching algorithm, cf. \cite{Kiwiel2008}}
	\label{alg:bps_algorithm}
	\begin{algorithmic}[1]
		\Require Parameters $l$, $u$, $d$, $a$, $b$, $c$ of \eqref{eq:knapsack}
		\Ensure $x^\star = \left[x_1^\star,\ldots,x_n^\star\right]\tran$
		\Procedure{BPS}{$l$, $u$, $d$, $a$, $b$, $c$}
		\State \parbox[t]{.8\linewidth}{$\mathcal{L} \leftarrow \{\lambda_1^{\mathrm{l}}, \lambda_1^{\mathrm{u}}, \ldots, 
			\lambda_n^{\mathrm{l}}, \lambda_n^{\mathrm{u}} \}$, where $\lambda_i^{\mathrm{l}}$ and $\lambda_i^{\mathrm{u}}$, with $i=1,\ldots,n$, are computed as in \eqref{eq:breakpoints}
		} 
		\State $\tL \leftarrow \min \mathcal{L}$, $\tU \leftarrow \max \mathcal{L}$
		\While{\textbf{true}}
		\State $\hat{\lambda} \leftarrow \med{\left\{\mathcal{L}\right\}}$
		\If{$g(\hat{\lambda}) = c$}
		\State $\lambda^\star \leftarrow \hat{\lambda}$ \label{alg:bps_algorithm_median}, \textbf{break}
		\ElsIf{$g(\hat{\lambda}) > c$}
		\State $\tL \leftarrow \hat{\lambda}$, $\mathcal{L} \leftarrow \left\{ \lambda\in \mathcal{L} ~|~ \lambda > \hat{\lambda} \right\}$
		\ElsIf{$g(\hat{\lambda}) < c$}
		\State $\tU \leftarrow \hat{\lambda}$, $\mathcal{L} \leftarrow \{ \lambda\in \mathcal{L} ~|~ \lambda < \hat{\lambda} \}$
		\EndIf
		\If{$\mathcal{L} = \emptyset$}
		\State $\lambda^\star \leftarrow \tL - \left[ g(\tL) - c \right] \frac{\tU - \tL}{g(\tU) - g(\tL)}$, \textbf{break} \label{alg:bps_algorithm_interpolation}
		\EndIf
		\EndWhile
		\State \Return $x^\star \leftarrow x(\lambda^\star)$
		\EndProcedure
	\end{algorithmic}	
	\vskip1pt\offinterlineskip\hrulefill
\end{center}

A simple implementation of the BPS algorithm is listed in Alg.~\ref{alg:bps_algorithm}. It generates successive nondecreasing underestimates $\tL$ of $\tLs$ and nonincreasing overestimates $\tU$ of $\tUs$ by evaluating $g(\lambda)$ at trial breakpoints in $\left[\tL,\tU\right]$ until $\tL$ and $\tU$ become two consecutive breakpoints; then $g(\lambda)$ is linear on $\left[\tL,\tU\right]$ and $\lambda^\star$ is found by simple interpolation.

Note that all steps in Alg.~\ref{alg:bps_algorithm} can be executed in $\mc{O}(\left\vert{\mathcal{L}}\right\vert)$ time, if proper care is taken to use previous calculations (in particular when evaluating $g(\hat{\lambda})$, see \cite{Kiwiel2008}). Since initially $\left\vert{\mathcal{L}}\right\vert=2n$, and with every pass through the while loop $\left\vert{\mathcal{L}}\right\vert$ is reduced by half, the following proposition holds.

\begin{proposition}[see \cite{Kiwiel2008} for details]
	The BPS algorithm in Alg.~\ref{alg:bps_algorithm} has a linear worst-case time complexity $\mc{O}(n)$.
\end{proposition}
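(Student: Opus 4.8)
The plan is to bound the total running time by a geometric series in $n$, using two facts already noted in the text: the working set $\mathcal{L}$ starts with $|\mathcal{L}| = 2n$ breakpoints, and on each pass through the \texttt{while} loop at least half of them are discarded, since $\hat\lambda$ is the median of $\mathcal{L}$ and only the strictly smaller (or strictly larger) elements survive. First I would make the iteration count precise. Writing $\mathcal{L}_k$ for the working set at the start of the $k$-th pass, we have $|\mathcal{L}_1| = 2n$ and $|\mathcal{L}_{k+1}| \le \lfloor |\mathcal{L}_k|/2 \rfloor$, hence $|\mathcal{L}_k| \le 2n/2^{k-1}$ and $\mathcal{L}_k = \emptyset$ once $k > 1 + \log_2(2n)$. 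When $\mathcal{L}$ becomes empty the algorithm executes the interpolation branch (line~\ref{alg:bps_algorithm_interpolation}) and stops; an earlier exit via line~\ref{alg:bps_algorithm_median} only helps. So the loop runs $\mc{O}(\log n)$ times, which also shows finite termination.

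Next I would account for the cost of a single pass. Each pass performs a median computation on $\mathcal{L}_k$, one evaluation of $g(\hat\lambda) = b\tran x(\hat\lambda)$, a partition of $\mathcal{L}_k$ around $\hat\lambda$, and an update of $\tL,\tU$. With a deterministic worst-case linear-time selection routine, the median and the partition cost $\mc{O}(|\mathcal{L}_k|)$. The delicate point is evaluating $g(\hat\lambda)$ without re-scanning all $n$ coordinates: every breakpoint discarded in an earlier pass belongs to a coordinate $i$ whose value $x_i(\lambda)$ is already pinned to $l_i$ or $u_i$ for all $\lambda$ in the current bracket $[\tL,\tU]$ (by the monotonicity of $g$ and the form of \eqref{eq:xilambda}), so one carries a running accumulator of $\sum b_i x_i$ over those resolved coordinates and only re-sums the $\mc{O}(|\mathcal{L}_k|)$ unresolved contributions — this is exactly the incremental bookkeeping referenced via \cite{Kiwiel2008}. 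The special handling around $d_i = 0$ with $\lambda b_i = a_i$, via \eqref{eq:LUands}--\eqref{eq:xilambdaiii}, is triggered at most once (only when the optimum is detected) and costs $\mc{O}(|\mathcal{L}_k|) \le \mc{O}(n)$, so it does not affect the order.

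Finally I would assemble the estimate: computing all $2n$ breakpoints in \eqref{eq:breakpoints} is $\mc{O}(n)$, the closing reconstruction $x^\star = x(\lambda^\star)$ is $\mc{O}(n)$, and the loop costs $\sum_{k\ge1} C|\mathcal{L}_k| \le \sum_{k\ge1} C\,2n/2^{k-1} = 4Cn$, so the total is $\mc{O}(n)$. The main obstacle is the second step: certifying that a pass really costs $\mc{O}(|\mathcal{L}_k|)$ and not $\mc{O}(n)$. This rests on two implementation choices — a worst-case (not randomized) linear-time median, and the amortized/incremental evaluation of $g$ that folds pinned coordinates into an accumulator — and it is the only place where the ``proper care'' alluded to just after Alg.~\ref{alg:bps_algorithm} is actually needed; the remaining bookkeeping and the geometric summation are routine.
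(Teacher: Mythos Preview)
Your proposal is correct and follows essentially the same approach as the paper, which merely sketches the argument in the paragraph preceding the proposition (each pass costs $\mc{O}(|\mathcal{L}|)$ with careful reuse of previous $g$-evaluations, $|\mathcal{L}|$ halves each pass from $2n$, hence a geometric sum gives $\mc{O}(n)$) and defers the implementation details to \cite{Kiwiel2008}. You have simply fleshed out that sketch with the explicit geometric bound and the accumulator bookkeeping for pinned coordinates, which is exactly what the paper alludes to.
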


Consequently, it has been shown that the coordination problem \eqref{eq:evaluated_coordination_problem_formulation}, in the case of one coupling constraint, can be solved with an efficient, linear-time algorithm.

\subsection{Multiple coupling constraints}
\label{subsec:multiple_coupling_constraints}

The Lagrangian relaxation of the general MCQKP \eqref{eq:separable_QP} is:
\begin{subequations}
\label{eq:multiple_knapsack_lagrange}
\begin{alignat}{2}
\phi(\lambda):=\quad & \underset{x}{\text{min}}
& & \quad \textstyle \frac{1}{2} x\tran D x - a\tran x + \lambda\tran(B x-c), \\
& \hspace{0.15cm}\text{s.t.} 
& & \quad l \leq x \leq u, 
\end{alignat}
\end{subequations}
where $\lambda\in\RR^{m}$, $B=[B_1, \ldots, B_n]\in\RR^{m\times n}$, $c\in\RR^{m}$, with $m\geq 2$ denoting the number of coupling equality constraints. 
As before, $\phi(\lambda)$ is a separable, concave function
\begin{align}
\label{eq:lambdasep}
\phi(\lambda) = \phi_0(\lambda)+\phi_1(\lambda)+\ldots+\phi_n(\lambda),
\end{align}
where
\begin{align}
\label{eq:phi_i}
\phi_i(\lambda) &:= \underset{x_i \in [l_i, u_i]}{\text{min}}\ \textstyle \frac{1}{2} d_i x_i^2 + (\lambda\tran B_i - a_i)x_i, \ i=1,\ldots,n\\
\phi_0(\lambda) &= - \lambda\tran c.
\end{align}
Solving \eqref{eq:separable_QP} is the same as finding
\begin{align}
\label{eq:lambdastar}
\lambda^\star \in \arg \underset{\lambda}{\mathrm{max}} \ \phi(\lambda).
\end{align}
For simplicity of exposition, in the rest of this section it is assumed that $d_i > 0$, $i=1,\ldots, n$. Therefore, the minimizer in \eqref{eq:phi_i} takes a simplified form
\begin{align}
\label{eq:x_i_od_lambda}
x_i(\lambda) = \mathrm{median}\{l_i, (a_i-\lambda\tran B_i)/d_i, u_i\}.
\end{align}
The optimization problem \eqref{eq:multiple_knapsack_lagrange} can be interpreted as an mp-QP if $\lambda$ is treated as a parameter. From Theorem \ref{th:mpqp_solutions}, $\phi(\lambda)$ is a piecewise quadratic concave function over polyhedral partition of $\lambda$ space (i.e. $\RR^m$) imposed by $2n$ hyperplanes:
\begin{equation}
\label{eq:hyperplanes}
\mathcal{H}_i =\{\lambda ~|~  h_{i,0} + h_{i}\tran \lambda = 0 \}, \ i=1,\ldots,2n,
\end{equation}
where $h_{i,0}\in\RR$ and $h_{i}\in\RR^m$, $i=1,\ldots,n$, are defined as:
\begin{equation}
\label{eq:hypparam}
h_i=h_{i+n}:=B_i,\ h_{i,0}:=d_i l_i - a_i,\ h_{i+n,0}:=d_i u_i - a_i.
\end{equation}
An example of $\phi(\lambda)$, for $n=3$ and $m=2$, is illustrated in Fig. \ref{fig:phi_od_lambda}, with different colors denoting different regions. 

\begin{figure}[!t]
	\centering
	\includegraphics[width=0.45\textwidth]{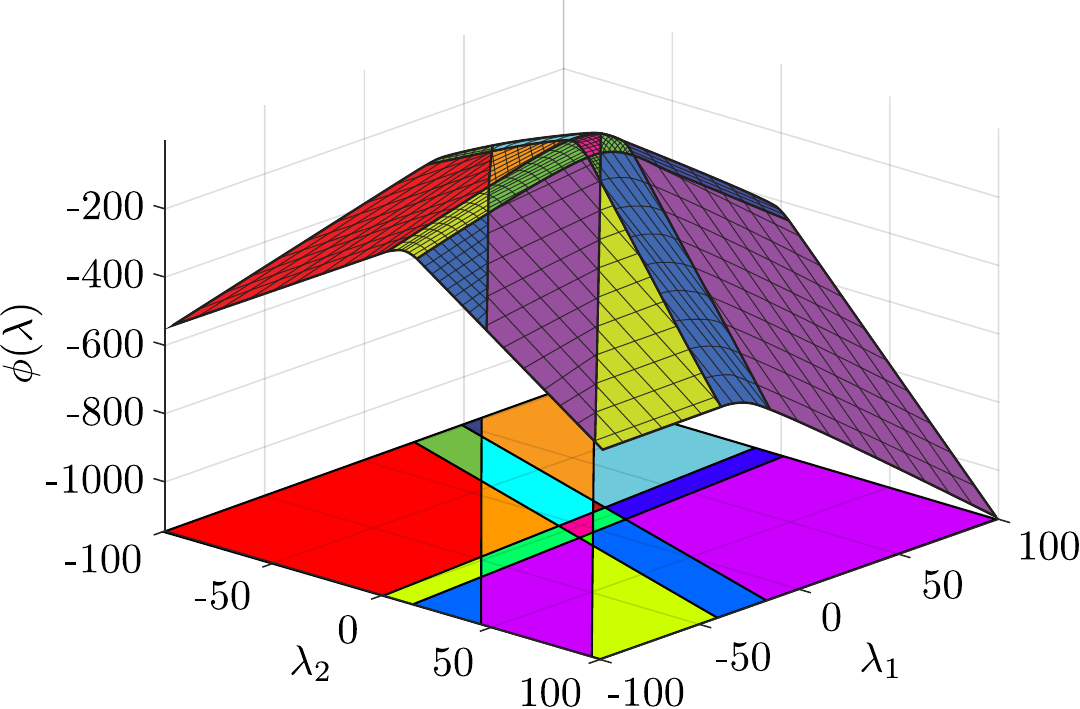}
	\caption{An example of a function $\phi(\lambda)$ for a MCQKP \eqref{eq:separable_QP} with $n=3,\;m=2$.}
	\label{fig:phi_od_lambda}\vspace{-0.5cm}
\end{figure}

A key element in the following computations is the availability of an \emph{oracle} -- an algorithm that takes as inputs parameters $p_0$ and $p\neq 0$ of a hyperplane $\mathcal{P}\subset\RR^m$,
\begin{equation}
\label{eq:generichyp}
\mathcal{P} = \{\lambda\in\RR^m ~|~ p_0 + p\tran\lambda=0 \}
\end{equation}
and parameters of function $\phi:\RR^m \rightarrow \RR$ in \eqref{eq:multiple_knapsack_lagrange}, and then returns the \emph{sign of hyperplane}, i.e. information about the relative position of 
$\lambda^\star$ with respect to that hyperplane:
\begin{equation}
\label{eq:hypsign}
\sign(\mathcal{P}) := \mathrm{sign}(p_0 + p\tran\lambda^\star) \in \{-1, 0, 1\}.
\end{equation}
One possible implementation of an oracle is given in Alg.~\ref{alg:oracle}.

Note that if $\sign(\mathcal{H}_i)$ and $\sign(\mathcal{H}_{n+i})$ are known, for some $i\in\{1,\ldots,n\}$, then the exact expression for $x_i(\lambda)$ in \eqref{eq:x_i_od_lambda} is also known because
\begin{subequations}
\label{eq:explicitxi}
\begin{align}
&\sign(\mathcal{H}_i)\in\{0, 1\} \Rightarrow \sign(\mathcal{H}_{n+i})=1, \ x_i(\lambda) = l_i,\\
&\sign(\mathcal{H}_{n+i})\in\{-1, 0\} \Rightarrow \sign(\mathcal{H}_{i})=-1, \ x_i(\lambda) = u_i,\\
&\sign(\mathcal{H}_i)=-1 \wedge \sign(\mathcal{H}_{n+i})=1 \Rightarrow \ x_i(\lambda) = \textstyle\frac{a_i-\lambda\tran B_i}{d_i},
\end{align}
\end{subequations}
and, consequently, one can obtain an explicit (in general quadratic) form of $\phi_i(\lambda)$ in \eqref{eq:phi_i}. 
It is convenient to aggregate all known explicit expressions for $\phi(\lambda)$ in $\phi_0(\lambda)$
\begin{equation}
\label{eq:explicitphi0}
\begin{array}{rcl}
\phi_0(\lambda)&:=&\frac{1}{2}\lambda\tran H_0\lambda+F_0\tran\lambda+G_0= \\
&=& -c\tran\lambda + \sum_{i\in\mathcal{E}} \left.\phi_i(\lambda)\right|_{x(\lambda) \ \mathrm{from} \ \eqref{eq:explicitxi}},
\end{array}
\end{equation}
where $\mathcal{E}$ is the set of indices of $\phi_i(\lambda)$ with known explicit expressions
\begin{equation}
\label{eq:discovered}
\mathcal{E}:=\{i\in\{1,\ldots,n\} ~|~ (i\notin \mathcal{U}) \wedge (i+n\notin \mathcal{U})\}.
\end{equation}
and $\mathcal{U}\subseteq\{1,\ldots,2n\}$ is the set of indices of hyperplanes whose signs are not yet determined. Note that \eqref{eq:lambdasep} now becomes
\begin{equation}
\label{eq:phi4alg}
\phi(\lambda) = \phi_0(\lambda) + \sum_{j\in\{1,\ldots,n\} \backslash \mathcal{E}} \phi_j(\lambda),
\end{equation}
with the implicit $\phi_j(\lambda)$ defined by \eqref{eq:phi_i}.

The proposed algorithm for finding $\lambda^\star$ is called the hyperplane searching (HPS) algorithm, since it is essentially a generalization of the BPS algorithm to multiple coupling constraints -- with the notion of breakpoints \eqref{eq:breakpoints} being replaced with the notion of hyperplanes \eqref{eq:hyperplanes}. A pseudo-code of the HPS algorithm is listed in Alg. \ref{alg:hps_algorithm}.

\begin{center}
	\captionof{algorithm}{Hyperplane searching algorithm}
	\label{alg:hps_algorithm}
	\begin{algorithmic}[1]
		\Require Index set of hyperplanes with unknown signs, $\mathcal{U}$, parameters $l$, $u$, $d$, $a$, $B$ of \eqref{eq:separable_QP}, $\phi_0(\cdot)$ in \eqref{eq:explicitphi0}
		\Ensure $x^\star = \left[x_1^\star,\ldots,x_n^\star\right]\tran$
		\Procedure{HPS}{$\mathcal{U}$, $l$, $u$, $d$, $a$, $B$, $\phi_0$}
		\State $[m,n] \leftarrow \mathrm{dim}(B)$
		\State Define hyperplanes $\mathcal{H}_{\mathcal{U}}$ as in \eqref{eq:hyperplanes}--\eqref{eq:hypparam}
		
		\While{$\mathcal{U}\neq \emptyset$}
		\State $[\mathcal{I}, \sign(\mathcal{H}_{\mathcal{I}})] \leftarrow \mathrm{MDS}(\mathcal{H}_{\mathcal{U}},\mathcal{U},l,u,d,a,B,\phi_0)$
		\For{$i\in \mathcal{I}$} 
		\State $\mathcal{U} \leftarrow \mathcal{U} \backslash i$
		\State \textbf{if} $i>n$ \textbf{then} $i\leftarrow i-n$ \textbf{end if}
		\If{$(i \notin \mathcal{U}) \wedge (i+n\notin \mathcal{U})$}
		\State \parbox[t]{.70\linewidth}{Compute explicit expression for ${\phi}_{i}(\cdot)$ via \eqref{eq:explicitxi}, and 
			update ${\phi}_0(\cdot) \leftarrow {\phi}_0(\cdot)+{\phi}_{i}(\cdot)$}
		\EndIf
		\EndFor
		\EndWhile
		\State $\lambda^\star \leftarrow \arg \underset{\lambda}{\mathrm{max}} \ {\phi}_0(\lambda) = -H_0^{-1}F_0$ 
		\State \Return $x^\star \leftarrow x(\lambda^\star)$
		\EndProcedure
	\end{algorithmic}	
	\vskip1pt\offinterlineskip\hrulefill
\end{center}

In each iteration of the HPS algorithm the sign of a \emph{fixed proportion} of $|\mathcal{U}|$ hyperplanes in $\RR^m$ (with currently unknown signs) is determined using the multidimensional search (MDS) algorithm, which was first introduced in \cite{Megiddo1984} and later improved in \cite{Dyer1986,Clarkson1986}. The MDS algorithm takes as inputs the set $\mathcal{H}_{\mathcal{U}}$ of hyperplanes in $\RR^m$, the index set $\mathcal{U}$ of those hyperplanes, parameters of the original problem \eqref{eq:separable_QP}, and ${\phi}_0(\lambda)$ -- the aggregate of the currently discovered explicit expressions for $\phi$ (starting with ${\phi}_0(\lambda)=-c\tran \lambda$, see \eqref{eq:explicitphi0}--\eqref{eq:phi4alg}). The MDS algorithm returns the index set $\mathcal{I}$ of hyperplanes whose signs have been deduced and the values of those signs, $\sign(\mathcal{H}_{\mathcal{I}})$. Once the signs of all hyperplanes are known the explicit expression for $\phi(\lambda)$ is available and the computation of $\lambda^\star$ in \eqref{eq:lambdastar} becomes simple unconstrained maximization of a concave quadratic function ${\phi}_0(\lambda)$. In the end, one calculates the optimizer to \eqref{eq:separable_QP} as $x^\star = x(\lambda^\star)$. 

The MDS algorithm runs in a recursive manner, starting with the initial problem at the level $m$, and then explores, in a depth-first approach, a binary tree down to the (bottom) level $1$. When moving towards lower levels, the MDS uses appropriate variable transformations to create $2$ sets of hyperplanes (of, roughly, half the size of the parent problem), whose effective dimensions (in the new coordinates) are reduced by one. At the bottom level, with one query to the oracle (passing the description of the hyperplane in the initial, level $m$ coordinates), the MDS algorithm can deduce the signs of roughly half of hyperplanes on that level. Due to the properties of transformations that are used to reduce dimensions of hyperplanes, it is possible to determine the signs of a portion of hyperplanes at level $\ell+1$, from the signs of hyperplanes at level $\ell$. The intricate details of these transformations and other implementation details of the general MDS algorithm can be found in \cite{Megiddo1984,Dyer1986,Clarkson1986}. 

As mentioned before, the MDS algorithm computes the signs of a fixed proportion of $|\mathcal{U}|$ hyperplanes, while making, at the bottom level of its exploration strategy, a finite number of queries to the oracle (e.g., Alg.~\ref{alg:oracle}). A key feature of the MDS algorithm is that the number of those queries depends (exponentially) only on the number of coupling constraints, $m$, and does not depend on $|\mathcal{U}|$, cf. \cite{Megiddo1984,Dyer1986}.

\begin{center}
	\captionof{algorithm}{Oracle algorithm -- sign of hyperplane $\mathcal{P}\subset\RR^m$}
	\label{alg:oracle}
	\begin{algorithmic}[1]
		\Require Parameters of $\mathcal{P}$ as in \eqref{eq:generichyp}, index set of hyperplanes with unknown signs, $\mathcal{U}$,
		parameters $l$, $u$, $d$, $a$, $B$ of \eqref{eq:separable_QP}, $\phi_0(\cdot)$ in \eqref{eq:explicitphi0}
		\Ensure $\sigma = \sign(\mathcal{P})$
		\Procedure{ORACLE}{$p_0$, $p$, $\mathcal{U}$, $l$, $u$, $d$, $a$, $B$, ${\phi}_0$}
		\State $[m,n] \leftarrow \mathrm{dim}(B)$
		\If{$m>1$}
		\State \parbox[t]{.8\linewidth}{
			Compute parameters $\bar{l}$, $\bar{u}$, $\bar{d}$, $\bar{a}$, $\bar{B}$, and $\bar{\phi}_0(\cdot)$,
			of restriction of $\phi$ in \eqref{eq:phi4alg} on $\mathcal{P}$, i.e. $\bar{\phi}:\RR^{m-1}\rightarrow \RR$,
			\begin{equation*} 
			\bar{\phi}(\bar{\lambda})\leftarrow \phi\left(\left[\begin{array}{c}
			\bar{\lambda} \\ 
			-\frac{p_0}{p_m}-\sum_{j=1}^{m-1} \frac{p_j}{p_m} \bar{\lambda}_j
			\end{array}\right]\right)
			\end{equation*}
		}
		\State Find the optimal solution for the restriction
		\begin{eqnarray*}
			\bar{\lambda}^\star &\leftarrow & \mathrm{HPS}(\mathcal{U}, \bar{l}, \bar{u}, \bar{d}, \bar{a}, \bar{B}, \bar{\phi}_0),\\
			\lambda^{\star}_{\mathcal{P}}&\leftarrow &\left[\begin{array}{c}
				\bar{\lambda}^{\star} \\ 
				-\frac{p_0}{p_m}-\sum_{j=1}^{m-1} \frac{p_j}{p_m} \bar{\lambda}^{\star}_j
			\end{array}\right],
		\end{eqnarray*}
		\Else
		\State $\lambda^{\star}_{\mathcal{P}} \leftarrow -\frac{p_0}{p_1}$
		\EndIf
		\State \parbox[t]{.85\linewidth}{Compute subgradient of $\phi$ at 
			$\lambda^{\star}_{\mathcal{P}}\in\mathcal{P}$,
			and deduce the value of $\sign(\mathcal{P})$ from the values of 
			derivative of $\phi$ at $\lambda^{\star}_{\mathcal{P}}$ in the direction $p$ and $-p$.}
		\State \Return $\sigma \leftarrow \sign(\mathcal{P})$
		\EndProcedure
	\end{algorithmic}	
	\vskip1pt\offinterlineskip\hrulefill
\end{center}

In Alg.~\ref{alg:oracle}, the oracle deduces the value of $\sign(\mathcal{P})$ by using subgradient of $\phi$ at $\lambda^{\star}_{\mathcal{P}}\in\mathcal{P}=\{\lambda ~|~ p_0+p\tran\lambda = 0\}$. The idea is as follows. Let $\mathcal{H}_{\mathcal{U}}$ be the set of all hyperplanes with unknown signs, formed as in \eqref{eq:hyperplanes}--\eqref{eq:hypparam}, with $l$, $u$, $d$, $a$, being the parameters passed to the oracle. 
Clearly, for arbitrarily small $\epsilon>0$, one can readily compute the explicit expressions for $\phi$ in \eqref{eq:phi4alg} at $\lambda=\lambda_{\mathcal{P}}^\star+\epsilon p$,
\begin{equation}
\textstyle\phi_{+}(\lambda) = \frac{1}{2}\lambda\tran H_{+} \lambda + F_{+}\tran \lambda + G_{+},
\end{equation}
and at $\lambda=\lambda_{\mathcal{P}}^\star-\epsilon p$,
\begin{equation}
	\textstyle\phi_{-}(\lambda) = \frac{1}{2}\lambda\tran H_{-} \lambda + F_{-}\tran \lambda + G_{-},
\end{equation}
where matrices $H_{+}$, $H_{-}$, $F_{+}$, $F_{-}$, $G_{+}$, and $G_{-}$, are obtained similarly to \eqref{eq:explicitxi}--\eqref{eq:explicitphi0}, by checking on which side of each $\mathcal{H}_i$ lies the corresponding $\lambda$. Computations can be sped up by noticing that the expressions for $\phi_{+}$ and $\phi_{-}$ differ only for the set of hyperplanes that intersect $\mathcal{P}$ at the point $\lambda^{\star}_{\mathcal{P}}$. After computation of gradients:
\begin{subequations}
	\begin{align}
		g_{+} &:= \textstyle\frac{\partial}{\partial\lambda}\phi_{+}(\lambda)\lvert_{\lambda=\lambda_\mathcal{P}^\star} = H_{+} \lambda_\mathcal{P}^\star + F_{+},\\
		g_{-} &:= \textstyle\frac{\partial}{\partial\lambda}\phi_{-}(\lambda)\lvert_{\lambda=\lambda_\mathcal{P}^\star} = H_{-} \lambda_\mathcal{P}^\star + F_{-},
	\end{align}
\end{subequations}
one can determine the position of the global optimizer $\lambda^\star$ relative to $\mathcal{P}$ as follows:
\begin{equation}
\sign(\mathcal{P})=\left\{
\begin{array}{lcl}
-1 & \mathrm{if} & (p\tran g_{+}<0) \wedge (p\tran g_{-}<0),\\
\phantom{-}0 & \mathrm{if} & (p\tran g_{+}\leq 0) \wedge (p\tran g_{-}\geq 0),\\
\phantom{-}1 & \mathrm{if} & (p\tran g_{+}>0) \wedge (p\tran g_{-}>0).
\end{array}
\right.
\end{equation}
Note that other cases, e.g., $(p\tran g_{+}>0) \wedge (p\tran g_{-}<0)$, cannot happen due to concavity of $\phi(\lambda)$. Figure \ref{fig:subgrad} illustrates the situation when $\sign(\mathcal{P})=+1$.

\begin{figure}[!t]
	\centering
	\includegraphics[width=0.33\textwidth]{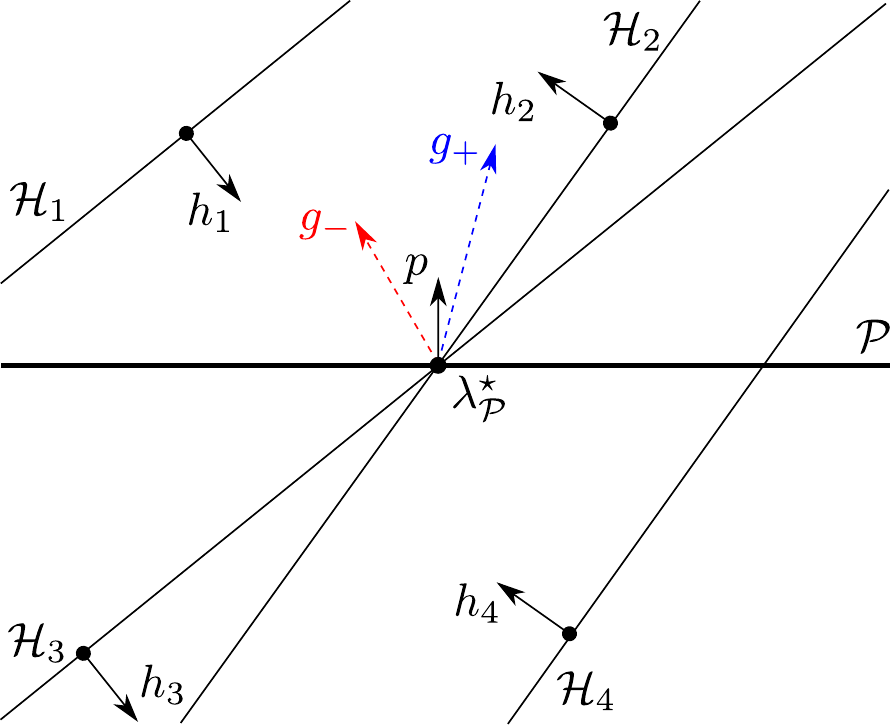}
	\caption{Illustration of the subgradient method for determining the position of $\lambda^\star$ relative to a hyperplane $\mathcal{P}$. A situation where $\sign(\mathcal{P})=+1$ is shown.}
	\label{fig:subgrad}\vspace{-0.5cm}
\end{figure}

In summary, we have the following proposition.
\begin{proposition}\label{prop:hps_algorithm_is_linear}
	For a fixed number of coupling constraints, $m$, the HPS algorithm (Alg.~\ref{alg:hps_algorithm}) solves problem \eqref{eq:separable_QP} with complexity $\mc{O}(n)$.
\end{proposition}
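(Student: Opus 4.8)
The plan is an induction on the number of coupling constraints $m$. Write $T_m(\nu)$ for the worst-case running time of HPS (Alg.~\ref{alg:hps_algorithm}) on an instance of \eqref{eq:separable_QP} in which $|\mathcal{U}|=\nu$ hyperplanes still have undetermined sign, and let $\mc{O}_m(\cdot)$ denote a bound whose hidden constant may depend on the \emph{fixed} quantity $m$. The claim I would actually prove is $T_m(\nu)\le c_m\,\nu$ with $c_m$ depending only on $m$ (in the worst case exponentially in $m$); since at the top-level call $|\mathcal{U}|\le 2n$ and the only place a full sweep over all $n$ variables is unavoidable is the final recovery $x^\star=x(\lambda^\star)$, which is performed once, the stated $\mc{O}(n)$ bound for fixed $m$ follows immediately from $T_m(2n)=\mc{O}_m(n)$. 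It is therefore convenient to analyse the variant in which the recursive calls issued inside the oracle (Alg.~\ref{alg:oracle}) return only the maximiser $\lambda^\star=-H_0^{-1}F_0$ of the aggregated quadratic $\phi_0$ -- which is all the oracle needs in order to form the subgradients $g_{\pm}$ -- so that no recursive invocation ever touches all $n$ variables.

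\emph{Base case $m=1$.} The hyperplanes \eqref{eq:hyperplanes} degenerate to the $2n$ breakpoints \eqref{eq:breakpoints}, multidimensional search in $\RR^1$ is ordinary median bisection, and the oracle reduces to testing the sign of the derivative of $\phi$ at a single point; hence HPS at $m=1$ coincides with the BPS algorithm, and by the $\mc{O}(n)$ bound already established for Alg.~\ref{alg:bps_algorithm} one gets $T_1(\nu)=\mc{O}(\nu)$.

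\emph{Inductive step.} Assume $T_{m-1}(\nu)\le c_{m-1}\nu$, and consider one HPS call at level $m$ with $N_0=|\mathcal{U}|$. I would assemble three observations. First, each call to MDS on a set of $N$ hyperplanes in $\RR^m$ determines the signs of at least a fixed fraction $1/r_m$ of them, performs $\mc{O}_m(N)$ non-oracle arithmetic, and issues at most $q_m$ oracle queries, where $r_m$ and $q_m$ depend \emph{only} on $m$ -- this dimension-only bound on the query count is precisely the content of \cite{Megiddo1984,Dyer1986,Clarkson1986}. Consequently the \textbf{while} loop of Alg.~\ref{alg:hps_algorithm} makes $\mc{O}(\log N_0)$ passes and the successive sizes $N_0\ge N_1\ge\cdots$ of $\mathcal{U}$ decay geometrically, so $\sum_k N_k\le r_m N_0$. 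Second, the post-MDS bookkeeping in the \textbf{for} loop of pass $k$ -- deleting resolved indices from $\mathcal{U}$, forming the explicit $\phi_i$ via \eqref{eq:explicitxi} and folding them into $\phi_0$ -- costs $\mc{O}_m(N_k)$, because $\phi_0$ is throughout a single quadratic on $\RR^m$ of size $\mc{O}_m(1)$ and adding one more (at most quadratic) $\phi_i$ to it is $\mc{O}_m(1)$. Third, one oracle query in pass $k$ costs $\mc{O}_m(N_k)$ for restricting $\phi$ to the query hyperplane (which touches only $\phi_0$ and the $\le N_k$ still-implicit $\phi_j$'s) and for the subgradient computations, plus one recursive HPS call in dimension $m-1$ on those $\le N_k$ hyperplanes, which by the inductive hypothesis costs at most $c_{m-1}N_k$. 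Combining, pass $k$ costs at most $\mc{O}_m(N_k)+q_m\bigl(\mc{O}_m(N_k)+c_{m-1}N_k\bigr)$; summing the geometric series and adding the $\mc{O}_m(1)$ final solve $\lambda^\star=-H_0^{-1}F_0$ gives $T_m(N_0)\le c_m N_0$ with $c_m=\mc{O}_m(1)\,(1+q_m)(1+c_{m-1})$, finite for each fixed $m$. Taking $N_0=2n$ and appending the one-time evaluation $x^\star=x(\lambda^\star)$ (which is $\mc{O}(n)$) completes the bound.

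I expect the main obstacle to be controlling the nesting of the two recursions: a level-$m$ call triggers $\mc{O}(q_m\log n)$ oracle queries, each of which triggers a level-$(m-1)$ call, so a naive estimate accrues a factor $\log n$ per dimension and yields $\mc{O}(n(\log n)^{m-1})$ rather than $\mc{O}(n)$. The way out, around which the argument above is organised, is to charge each pass an amount proportional to the \emph{current} $|\mathcal{U}|$ rather than to $n$; this is legitimate precisely because the resolved hyperplanes are compressed into the constant-size quadratic $\phi_0$ and the recursive calls are made to return $\lambda^\star$ only, so that the per-level work telescopes into a convergent geometric series and $n$ enters just through the single outermost $x(\lambda^\star)$. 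The one ingredient used as a black box is the dimension-only query bound $q_m$ for multidimensional search, which is what keeps $c_m$ finite; everything else is elementary bookkeeping.
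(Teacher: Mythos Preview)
Your proposal is correct and follows essentially the same route as the paper: induction on $m$ with the BPS bound as the base case, the key black-box being that MDS makes only $q_m$ oracle queries with $q_m$ independent of $|\mathcal{U}|$, each oracle query costing $\mc{O}(|\mathcal{U}|)$ by the inductive hypothesis, and the geometric decay of $|\mathcal{U}|$ summing to $\mc{O}(n)$. If anything you are more careful than the paper's own sketch in flagging the $(\log n)^{m-1}$ pitfall and in noting that recursive HPS calls need only return $\lambda^\star$ (not $x(\lambda^\star)$) so that per-pass work is genuinely proportional to the current $|\mathcal{U}|$ rather than to $n$.
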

\begin{proof}
Note that all steps in Alg.~\ref{alg:oracle} can be executed in $\mc{O}(\left\vert{\mathcal{U}}\right\vert)$ time, if one can solve the HPS algorithm at dimension $m-1$ in $\mc{O}(\left\vert{\mathcal{U}}\right\vert)$. Clearly, for $m=1$ the HPS algorithm has the same complexity as the BPS algorithm, $\mc{O}(n)$, since the MDS algorithm in this case involves one median calculation and one call to the oracle that invokes no recursions. The proof then goes by induction on the dimension $m$. For arbitrary $m$ the computational effort in each iteration of the HPS algorithm is $\mc{O}(|\mathcal{U}|)$, where $|\mathcal{U}|$ is the number of remaining hyperplanes with unknown signs at the beginning of that iteration, plus the effort of a constant number of queries to the oracle (remember that the number of oracle queries depends on $m$ but not on $|\mathcal{U}|$) each of which is of complexity $\mc{O}(|\mathcal{U}|)$, cf. Alg.~\ref{alg:oracle}. Since the initial number of hyperplanes is $2n$, the total number of iterations of the HPS algorithm is $\mc{O}(\log n)$. By noting that after iteration $k$ only $\alpha^{k}|\mathcal{U}|$ hyperplanes remain with unknown signs, with $0<\alpha<1$, it follows that the total complexity of the HPS algorithm is $\mc{O}(n)$. Parameter $\alpha$ depends on the implementation of the MDS algorithm, i.e. the number of queries to the oracle. For a more detailed argumentation the reader is kindly referred to \cite{Megiddo1984,Dyer1986,Megiddo1993}.
%
\end{proof}


Note that the linear-time bound for the HPS algorithm is valid only if the number of coupling constraints $m$ is fixed. Indeed, the constant of linearity that is "hidden" inside $\mc{O}(n)$ grows exponentially with $m$ \cite{Dyer1986}. Clearly, this limits practical applicability of the HPS algorithm to cases when $m$ is relatively small. The algorithm would still be linear with respect to $n$, but with a larger $m$ it would become slower and slower in practice, as demonstrated in Section~\ref{sec:illustrative_example} on a practical case study for $m=1$ and $m=2$.

{
	\begin{remark}
		Note that in case when $m=1$ the HPS algorithm boils down to the BPS algorithm. To better see the connection between the two algorithms one has to look at the difference in implementation of the oracle used by Alg.~\ref{alg:bps_algorithm} and Alg.~\ref{alg:hps_algorithm}. In Alg.~\ref{alg:bps_algorithm} ($m=1$) the oracle is implemented between lines 6--12, i.e. it simplifies to a check of whether the function value of \eqref{eq:glambda} at a breakpoint is equal to the right-hand side of the coupling constraint. This simpler implementation of oracle in Alg.~\ref{alg:bps_algorithm} is possible because of monotonicity of function \eqref{eq:glambda}. However, the oracle for the case $m=1$ could also be implemented like in Alg.~\ref{alg:oracle} (which is the implementation of oracle used by Alg.~\ref{alg:hps_algorithm} for a general case $m>1$), i.e. by checking the sub-gradients of \eqref{eq:phi_lam_bps} at a given breakpoint. Both oracle implementations would determine the same thing -- the position of the global optimizer $\lambda^\star$ relative to the given breakpoint. Hence the claim that the BPS algorithm is just a special case of the HPS algorithm.\vspace{-0.3cm}
	\end{remark}
}

\section{Illustrative example}
\label{sec:illustrative_example}

The efficiency of the proposed control algorithm will be demonstrated on an example from the domain of electrical distribution systems. The interested reader is referred to our previous work \cite{Spudic2013} where the proposed control algorithm was applied to the optimal control of wind farms, with $m=1$. In this paper, however, we consider the optimal coordination of a microgrid system, e.g. as in \cite{Novoselnik2015} but with two coupling constraint and with more subsystems.

A microgrid is a cluster of locally controllable distributed (renewable) generation sources, storages, and loads operating as a single controllable system \cite{Lasseter2002,Hatziargyriou2007}. As such, microgrid belongs to a class of systems of systems coupled by flows of energy. Microgrids where local subsystems provide and/or consume both electrical power and heat are considered (see Fig. \ref{fig:microgrid}). Microgrid concept is expected to enhance utilization and integration of distributed (and especially renewable) generation sources through the use of energy storage systems that enable the time-shift between production and consumption. Microgrids can be operated and managed independently from the power distribution grid and can economically optimize their internal power flows and the power exchange profile with the grid based on varying electricity prices, local energy needs, the states of the storage devices and renewable sources availability \cite{Parisio2014,Gulin2015}. Furthermore, they can also be used to stabilize voltage conditions in the overall power distribution grid, to shorten the energy path to consumers, and to minimize the $\text{CO}_2$ footprint of the distributed energy production \cite{Lasseter2011}. 

\begin{figure}[!t]
	\centering
	\includegraphics[width=0.43\textwidth]{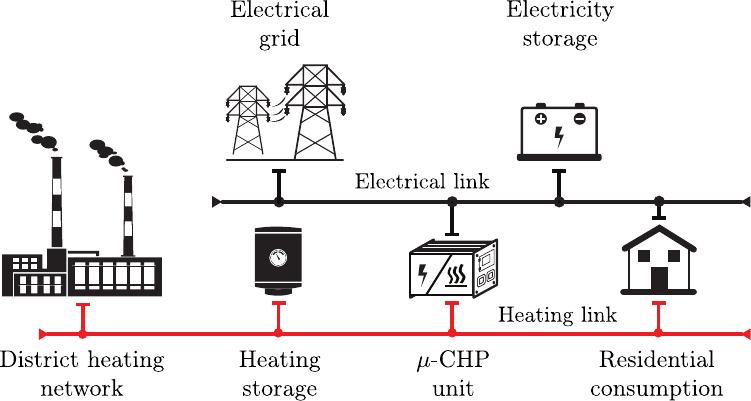}
	\caption{An illustration of a microgrid system comprising different generation, consumption, and storage devices that can provide and/or consume both electrical  and heating power.}
	\label{fig:microgrid}\vspace{-0.3cm}
\end{figure}

We consider a microgrid comprising $M$ controllable subsystems: (i) micro combined heat and power ($\mu$-CHP) units, and (ii) heating and electricity storage devices. There is also a certain number of uncontrollable heating and electricity consumers (see Fig. \ref{fig:microgrid}). All $\mu$-CHP units are grouped in set $\mc{G}$, all electricity storage units in set $\mc{S}_\tr{e}$, and all heating storage units in set $\mc{S}_\tr{h}$, such that $\vert \mc{G} \vert=\vert \mc{S}_\tr{e} \vert=\vert \mc{S}_\tr{h} \vert$ and $\vert \mc{G} \vert+\vert \mc{S}_\tr{e} \vert+\vert \mc{S}_\tr{h} \vert = M$. It is assumed that all subsystems share the same electrical/heating link. A prediction horizon of length $N=10$ is used.\vspace{-0.5cm}

\subsection{Mathematical models of individual subsystems}


\subsubsection{$\mu$-CHP units}

Micro CHP units are small-scale cogeneration units intended for homes or small commercial buildings. They can produce both electricity and heat from some primary energy source (e.g. natural gas). It is assumed that the $\mu$-CHP unit produces electricity primarily and heat is the by-product. The electrical efficiency of the $\mu$-CHP unit is denoted by $\eta_\tr{e}$ and its thermal efficiency by $\eta_\tr{h}$. For simplicity, no extra losses are considered, i.e. $\eta_\tr{h} = 1- \eta_\tr{e}$. The produced electrical and heating power are denoted by $p_{\tr{e}}$ and $p_{\tr{h}}$, respectively. It is evident that $p_{\tr{h}} = p_{\tr{e}} \eta_{\tr{h}}/\eta_{\tr{e}}$.

All $\mu$-CHP are modeled as second-order LTI systems with electrical efficiency $\etae$ chosen randomly from $\left[0.5,\;0.7\right]$ and system matrices\vspace{-0.1cm}
\begin{equation*}
	\begin{array}{l}
	A_i = \left[\begin{matrix}
	0.6+0.2 \zeta_{i} & -0.1-0.1\zeta_{i}\\
	1 & 0
	\end{matrix}\right], \ B_i = \left[\begin{matrix}
	\etae\\
	0
	\end{matrix}\right], \\[2ex]
	C_i = \left[\begin{matrix}
	1 & 0
	\end{matrix}\right], \ D_i = \left[0\right], \ i\in\mc{G},
	\end{array}\vspace{-0.1cm}
\end{equation*}
where $\zeta_{i}$ is a random number drawn uniformly from $\left[0,1\right]$. The output of $\mu$-CHP is $p_{\tr{e},i}$, i.e. the electrical power produced by the \ith $\mu$-CHP unit. The produced heat is simply $p_{\tr{h},i} = \frac{1-\etae}{\etae} p_{\tr{e},i}$. A power reference, which is to be tracked by the \ith $\mu$-CHP unit, is denoted by $p_{\tr{r},i}$.

The local objective of the \ith $\mu$-CHP unit reflects the desire to track the power reference but also penalizes the excessive use of input signal:
\begin{align*}
J_i = \textstyle\sum_{k=0}^{N-1} Q_i (C_i x_{k,i}-p_{\tr{r},i})^2 + R_i u_{k,i}^2,\quad i\in\mc{G},
\end{align*}
where $x_{k,i}$ and $u_{k,i}$ are the state and input, respectively, of the \ith $\mu$-CHP unit at time step $k$. Matrices $Q_i = 10 (1+4\zeta_i)$ and $R_i = 0.1 (1+\zeta_i)$ are used.

States and inputs are constrained as $\underline{x}_i\leq x_i \leq \overline{x}_i$ and $\underline{u}_i\leq u_i \leq \overline{u}_i$, respectively, where:
\begin{subequations}
	\begin{alignat*}{3}
	\underline{x}_i &= \left[0,0\right]\tran,\quad & \overline{x}_i &= (1+4\zeta_i) \left[20,20\right]\tran,&\quad &i\in\mc{G},\\
	\underline{u}_i &= 0, & \overline{u}_i &= \left[\overline{x}_i\right]_1/\etae,&\quad &i\in\mc{G}.
	\end{alignat*}
\end{subequations}

\subsubsection{Storage devices}
Both heating and electricity storage devices are modeled as integrators with the following system matrices (for simplicity we neglect losses):
\begin{equation*}
A_i = 1, \ B_i = \frac{-1}{20(1+4\zeta_i)}, \ C_i = 1, \ i\in\mc{S}_\tr{e}\cup\mc{S}_\tr{h},
\end{equation*}
where $\zeta_i$ is a random number drawn uniformly from $\left[0,1\right]$. State $x_i$ represents the amount of energy available in the storage device. Two reference signals are defined: (i) a locally defined reference for the state of charge $x_{\tr{r},i}$, and (ii) a reference for the power production/consumption $p_{\tr{r},i}$ that can be set by the microgrid coordinator.

The objective of the \ith storage device balances between tracking the locally defined reference for the state of charge and the externally defined reference for the power production/consumption:
\begin{align*}
J_i = \textstyle\sum_{k=0}^{N-1} Q_i (x_{k,i}-x_{\tr{r},i})^2 + R_i (u_{k,i} - p_{\tr{r},i})^2,   i\in\mc{S}_\tr{e}\cup\mc{S}_\tr{h},
\end{align*}
where $Q_i = (1+\zeta_i)I_2$ and $R_i = 10 (1+\zeta_i)$.

States and inputs are constrained as follows:
\begin{subequations}
	\begin{alignat*}{3}
	\underline{x}_i &= 0, & \overline{x}_i &= 1, &\quad &i\in\mc{S}_\tr{e}\cup\mc{S}_\tr{h},\\
	\overline{u}_i &= \frac{1}{5 \vert B_i \vert},\quad & \underline{u}_i &= - \overline{u}_i, &  \quad &i\in\mc{S}_\tr{e}\cup\mc{S}_\tr{h}.
	\end{alignat*}
\end{subequations}

\subsubsection{Consumers}
Power demand profiles are used to represent the consumers. These power demand profiles can be predicted, e.g. using historical data. The total electrical power demand is denoted by $\wt{p}_{\tr{e}}$ and the total heating power demand is denoted by $\wt{p}_{\tr{h}}$. Heat and electricity demand profiles during a typical day are shown in Fig. \ref{fig:demand}.\vspace{-0.2cm}

\begin{figure}[t]
	\centering
	\includegraphics[width=0.35\textwidth]{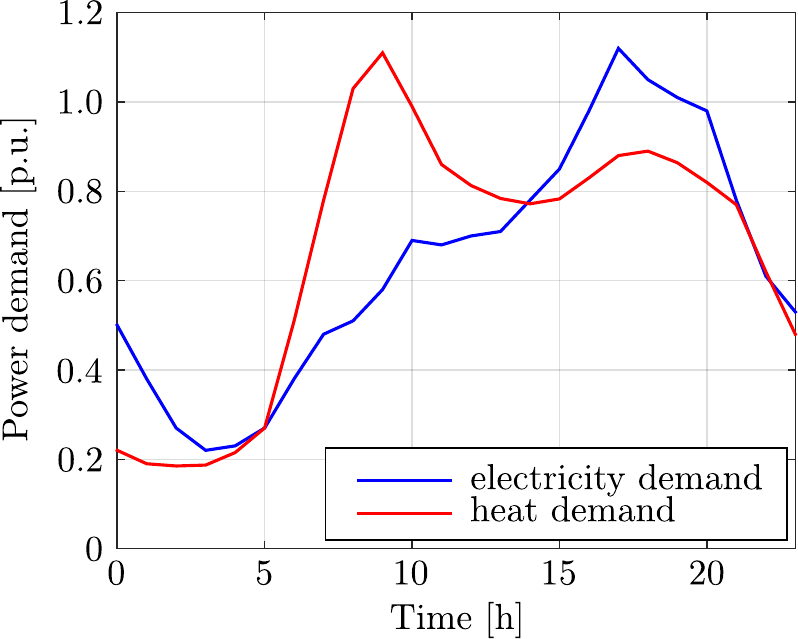}
	\caption{Typical heat and electricity demand profiles in p.u. during one day.}
	\label{fig:demand}\vspace{-0.5cm}
\end{figure}

\subsection{The control problem}

In the control problem that is considered, the microgrid coordinator needs to ensure that the total power output of the microgrid tracks the total power demand. This is achieved by distribution of the power references to individual subsystems, i.e. the task of the microgrid coordinator is to optimally coordinate individual subsystems to meet the common requirement while satisfying all constraints. It is assumed that microgrid operates in the grid-connected mode so the main electrical grid and district heating grid ensure the balance of both electrical and heating power (i.e. any excess generated power can be exported to the main grid and vice-versa). The global objective is the sum of locally defined objective functions for each subsystem. Such control problem can easily be formulated as an MPC problem. Furthermore, the considered control problem can be cast to form \eqref{eq:coordination_problem}. The local parameter contains the initial state of the subsystem and local references. The coordination parameter is the power output reference given to the individual subsystem. The coordination requirement states that the sum of individual power output references should be equal to the total power demand. In the simulations, two cases are considered:
\begin{enumerate}
	\item The first case is where only electrical power demand needs to be tracked. In this case there is only one coupling constraint so the global coordination problem \eqref{eq:evaluated_coordination_problem_formulation} can be solved using the BPS algorithm (see Subsection \ref{subsec:single_coupling_constraint}).
	
	\item The second case is where both the electrical power demand and the heating power demand are to be tracked. In this case we need to use the HPS algorithm (see Subsection \ref{subsec:multiple_coupling_constraints}) because there are now two coupling constraints.\vspace{-0.4cm}
\end{enumerate}

\subsection{Simulations and results}

In both cases, the computation time required to compute the solution using the proposed approach is compared to that of a classical on-line MPC implementation. The comparison is based on a number of simulations for different microgrid sizes (i.e. different numbers of $\mu$-CHP and storage devices in a microgrid). The simulations are done using Matlab 8.5.0 (R2015a) on a personal computer with Intel(R) Core(TM) i5 CPU at $3.4$GHz, with $8$ GB RAM, on Windows 10 operating system.

The classical approach (solving a QP \eqref{eq:coordination_problem} at every sampling instant) is tested using CPLEX (version 12.6) - a state of the art commercial QP solver that can exploit sparsity structures in QPs. YALMIP toolbox \cite{YALMIP} is used to formulate and solve the overall centralized optimization problem.

The proposed approach is implemented in Matlab. Off-line solutions are computed using the MPT toolbox \cite{MPT3}. The coordination algorithms, i.e. the BPS and the HPS algorithm, are implemented as Matlab functions, i.e. they are not implemented in C/C++ or a similar compiled programming language. For a median algorithm, however, we do use \texttt{std::nth\_element} from the C++ standard library, compiled as a mex-file for Matlab. This implementation of a partial sorting algorithm has a linear complexity on average, cf. \cite{CPP}. 

The simulations are done for a range of microgrid sizes of up to a thousand  subsystems. Each simulation is done for $168$ time steps where one time step equals to $1$ hour (which corresponds to $7$ days of total simulated time). The simulation results are shown in Fig. \ref{fig:histogram} and Fig. \ref{fig:experiment}. Figure \ref{fig:histogram} shows the histogram of computation times per instance of local evaluation in our approach (i.e. for evaluation on one subsystem) of $\wt{J}_i$ and $\wt{U}_i$. The computation time never exceeded 4.5 milliseconds and the average computation time was around 2.7 milliseconds.

Figure \ref{fig:experiment} depicts the comparison of the total computation times of a classical on-line MPC approach and our approach in both considered cases: the case of one coupling constraint is shown in Fig. \ref{fig:1eq} and the case of two coupling constraints in Fig. \ref{fig:2eq} . In both cases our approach clearly outperforms the classical MPC approach. In the case of one coupling constraint our approach is up to $100$ times faster than the classical MPC approach. In the case of two coupling constraints the speed-up is not that drastic but it is still present for microgrids comprising more than $100$ subsystems. We point out, however, that the results would probably be even better if our approach was fully implemented in highly efficient compiled programming language like C/C++ instead of an interpreted programming language like Matlab. 
The linear increase in computation time is clear for our approach in both Fig. \ref{fig:1eq} and Fig. \ref{fig:2eq}. Technically the complexity of the BPS and the HPS algorithms is linear with respect to the total number of variables in the transformed global coordination problem \eqref{eq:separable_QP}. However, recall that the number of variables is equal to the number of intervals in PPQW functions \eqref{eq:ppqw_fja} that are shared with the global coordinator by each subsystem. In our case study the average number of these intervals is similar for all subsystems (because our subsystems are similar) and averages to around 5 intervals per subsystem during all simulations. It follows that the total number of variables in \eqref{eq:separable_QP} was on average $5 M$ during our simulations and so the linear trend is evident with respect to the number of subsystems as well.

\begin{figure}[t]
	\centering
	\includegraphics[width=0.36\textwidth]{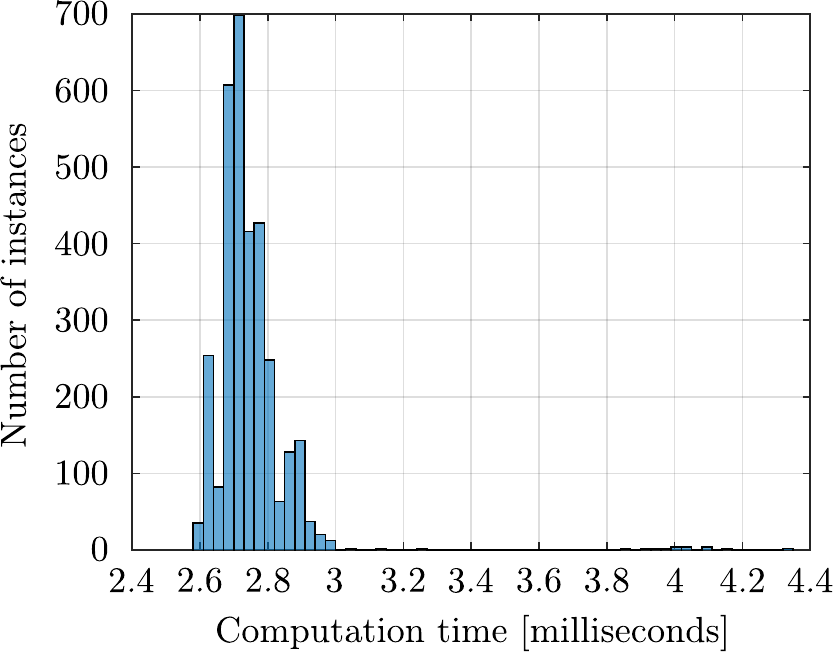}
	\caption{Histogram of the computation time for the local evaluation.}
	\label{fig:histogram}\vspace{-0.5cm}
\end{figure}

\begin{figure*}[!t]
	\centering
	\subfloat[The case with one coupling constraint.]
	{\includegraphics[height=2.2in]{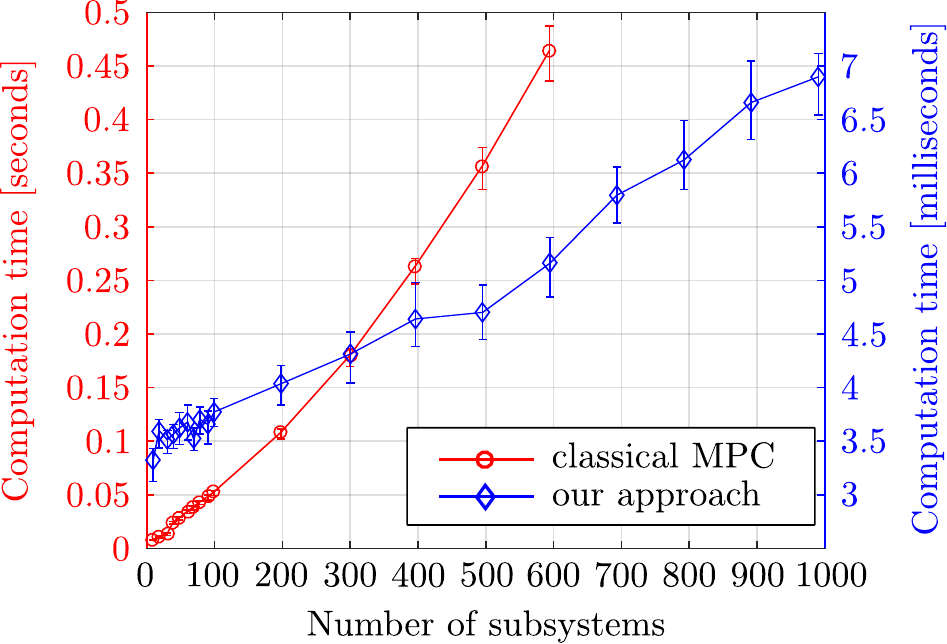}%
		\label{fig:1eq}}
	\hfil
	\subfloat[The case with two coupling constraints.]
	{\includegraphics[height=2.2in]{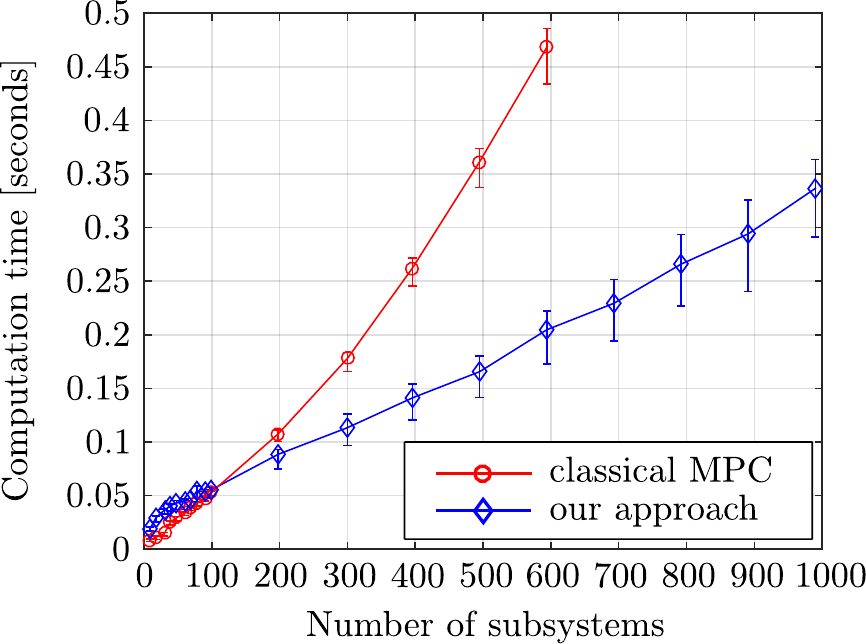}%
		\label{fig:2eq}}
	\caption{Comparison of computation times of a classical on-line MPC approach and our approach. Circles and diamonds denote mean computation time, while error bars denote the range of obtained computation times during simulations.}
	\label{fig:experiment}\vspace{-0.3cm}
\end{figure*}


\section{Conclusion}
\label{sec:conclusion}

The paper describes an efficient implementation of the MPC algorithm for the coordinated control of a large-scale System of Systems. The proposed method consists in distribution and parametrization of the overall control problem, which enables a significant part of the computational effort to be carried out off-line, by individual subsystems. The on-line computation -- finding the globally optimal solution -- is carried out by the coordinator. To achieve this the coordinator requires only a limited amount of information from subsystems that is sent once per sampling time, i.e. (unlike in classical distributed optimization techniques) there is no need for iterative communication between the coordinator and the subsystems. An algorithm is derived, for the coordinator's on-line computation, whose complexity grows linearly with the number of variables (for a fixed number of coupling constraints). There is no conservatism in the obtained solution in comparison to the classical centralized approach.

Using an example of a microgrid coordination controller design, it is shown that the proposed solution method can lead to drastic reductions in on-line computation times. In case of a single coupling constraint the on-line computation time for the proposed approach is up to two orders of magnitude smaller than the on-line computation time when the classical MPC controller implementation is used. Although the obtained speed-up is less drastic in the case of multiple coupling constraints, it still illustrates that the optimal control of large--scale systems at small sampling times is achievable.


%

\appendices
\section*{Appendix}

{
	The purpose of this Appendix is to show that problem \eqref{eq:evaluated_coordination_problem_formulation} can be reformulated as \eqref{eq:separable_QP}. For this purpose we introduce the following Lemma~\ref{lem:monotone} and Theorem~\ref{th:CPWQmonotoneOpt} which are then used to demonstrate the reformulation of \eqref{eq:evaluated_coordination_problem_formulation} to \eqref{eq:separable_QP}.
}

\label{appendix}

\begin{lemma}
\label{lem:monotone}
Let $\varphi:\RR\rightarrow\RR$ be a non-affine, convex function on an interval $[x_1, x_2]\subset \RR$, with $x_1<x_2$. Then
\begin{equation}
\label{eq:monotonediff}
\begin{array}{r}
\varphi(x_1+\xi)+\varphi(x_2-\xi) < \varphi(x_1)+\varphi(x_2), \\[1.5ex]
\forall \xi\in(0, x_2-x_1).
\end{array}
\end{equation}
\end{lemma}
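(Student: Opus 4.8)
The plan is to reduce to the case of a convex function vanishing at both endpoints by subtracting the chord through $(x_1,\varphi(x_1))$ and $(x_2,\varphi(x_2))$, and then to establish \emph{strict} negativity of that function on the open interval $(x_1,x_2)$ via a short ``propagation of zeros'' argument. The only genuine content is the strictness step; everything else is bookkeeping about affine functions.

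First I would let $\ell:\RR\to\RR$ be the affine function with $\ell(x_1)=\varphi(x_1)$ and $\ell(x_2)=\varphi(x_2)$, and set $\psi:=\varphi-\ell$ on $[x_1,x_2]$. Since $\ell$ is affine, $\psi$ is convex, $\psi$ is non-affine (because $\varphi$ is), and $\psi(x_1)=\psi(x_2)=0$. Moreover, again because $\ell$ is affine, $\ell(x_1+\xi)+\ell(x_2-\xi)=\ell(x_1)+\ell(x_2)=\varphi(x_1)+\varphi(x_2)$ for every $\xi$. Hence the claimed inequality \eqref{eq:monotonediff} is equivalent to showing $\psi(x_1+\xi)+\psi(x_2-\xi)<0$ for all $\xi\in(0,x_2-x_1)$.

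The core step is then to show that $\psi(y)<0$ for every $y\in(x_1,x_2)$. Writing $y=(1-t)x_1+tx_2$ with $t\in[0,1]$, convexity together with $\psi(x_1)=\psi(x_2)=0$ gives $\psi\le 0$ on $[x_1,x_2]$. Suppose, for contradiction, that $\psi(y_0)=0$ for some $y_0\in(x_1,x_2)$. For any $y\in(x_1,y_0)$, expressing $y_0$ as a strict convex combination of $y$ and $x_2$ and using $\psi(y)\le 0$, $\psi(x_2)=0$ forces $\psi(y)\ge 0$, hence $\psi(y)=0$; the symmetric argument on $(y_0,x_2)$ (expressing $y_0$ as a strict convex combination of $x_1$ and $y$) yields $\psi\equiv 0$ on all of $[x_1,x_2]$, i.e.\ $\varphi=\ell$ on $[x_1,x_2]$, contradicting that $\varphi$ is non-affine there. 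Therefore $\psi<0$ on the open interval, and since for $\xi\in(0,x_2-x_1)$ both $x_1+\xi$ and $x_2-\xi$ lie in $(x_1,x_2)$, we get $\psi(x_1+\xi)+\psi(x_2-\xi)<0$, which is \eqref{eq:monotonediff}.

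I expect the ``propagation of zeros'' argument to be the only place requiring care: the point is precisely that a convex function vanishing at both endpoints and at one interior point of an interval must vanish identically, and it is here that the non-affine hypothesis is consumed. An alternative route, which avoids the chord, is to observe directly that $F(\xi):=\varphi(x_1+\xi)+\varphi(x_2-\xi)$ is convex and symmetric about $\xi=(x_2-x_1)/2$ with $F(0)=F(x_2-x_1)$, and to run the same propagation argument on $F$; but then one still has to translate ``$F$ constant'' back into ``$\varphi$ affine'', which is slightly more fiddly than the chord reduction, so I would present the chord version.
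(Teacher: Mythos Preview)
Your proof is correct and follows essentially the same route as the paper: both compare $\varphi$ to the secant line through $(x_1,\varphi(x_1))$ and $(x_2,\varphi(x_2))$, note that $x_1+\xi$ and $x_2-\xi$ lie strictly below that chord, and add the two resulting inequalities. The only difference is that the paper simply asserts the strict inequalities $\varphi(x_1+\xi)<\ell(x_1+\xi)$ and $\varphi(x_2-\xi)<\ell(x_2-\xi)$ as consequences of ``non-affine, convex'', whereas you spell out the underlying reason via the propagation-of-zeros argument; your version is thus a more carefully justified rendering of the same idea.
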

\begin{proof}
Note that $x_1<x_1+\xi<x_2$ and $x_1<x_2-\xi<x_2$ for all $\xi\in(0, x_2-x_1)$. Since $\varphi$ is a non-affine, convex function on $[x_1, x_2]$ the following inequalities hold
\begin{equation*}
\textstyle\varphi(x_1+\xi)< \varphi(x_1)+ \frac{\varphi(x_2)-\varphi(x_1)}{x_2-x_1}\xi,
\end{equation*}
\begin{equation*}
\textstyle\varphi(x_2-\xi)< \varphi(x_1)+ \frac{\varphi(x_2)-\varphi(x_1)}{x_2-x_1}(x_2-\xi-x_1),
\end{equation*}
which can be easily combined to obtain \eqref{eq:monotonediff}.
\end{proof}

\begin{theorem}
\label{th:CPWQmonotoneOpt}
Consider the following optimization problem
\begin{equation}
\label{eq:equivalentProblem}
\begin{array}{cl}
\min\limits_{w, y_1,\ldots,y_N} & \gamma(w) +\varphi(Z_0)+\sum\limits_{r=1}^{N} [\varphi(y_r)-\varphi(Z_{r-1})]\\[2ex]
\mathrm{s.t.} & Z_{r-1} \leq y_r \leq Z_{r}, \ r=1,\ldots,N,\\
& \bar{A}\left[Z_0+\sum\limits_{r=1}^{N}[y_r-Z_{r-1}] \right] + \bar{B}w\leq \bar{C},
\end{array}
\end{equation}
where $\gamma:\RR^{n_{\mathrm{w}}}\rightarrow\RR$ is a convex piecewise quadratic function, $\bar{A}\in\RR^{m}$, $\bar{B}\in\RR^{m\times n_{\mathrm{w}}}$, $\bar{C}\in\RR^{m}$, and $\varphi:\RR \rightarrow\RR$ is a convex piecewise quadratic function on an interval $[Z_{0},Z_{N}]\subset\RR$,
\begin{equation}
\begin{array}{r}
\varphi(z)= \dfrac{1}{2} h_r z^2 + f_r z + g_r \ \mathrm{if}~ z\in[Z_{r-1}, Z_{r}],\\[1.5ex]
 r\in\{1,\ldots,N\},
\end{array}
\end{equation}
with $Z_0<Z_1<\ldots<Z_{N}$, $N$ is the number of subintervals (regions), and it is assumed, without loss of generality\footnote{Otherwise one could simply reduce $N$ by merging two neighboring subintervals for which $\varphi$ has the same affine expression.}, that coefficients $h_r, f_r, g_r\in\RR$ are such that 
\begin{equation}
h_r>0 ~\vee~ [f_r \; g_r] \neq [f_{r+1} \; g_{r+1}], \ \forall r\in\{1,\ldots,N-1\}.
\end{equation}
Let $w^*$, $y^*_r$, $r=1,\ldots,N$, be an optimizer of \eqref{eq:equivalentProblem}, then
\begin{itemize}
\item[i)] $\forall r\in\{2,\ldots,N\}$,
\begin{equation}
\label{eq:stickRight}
\mathrm{if}~ y^*_r>Z_{r-1} ~\mathrm{then}~ y^*_{s}=Z_{s}, \ s=1,\ldots,r-1,
\end{equation}
\item[ii)] $\forall r\in\{1,\ldots,N-1\}$,
\begin{equation}
\label{eq:stickLeft}
\mathrm{if}~ y^*_r=Z_{r-1} ~\mathrm{then}~ y^*_{s}=Z_{s-1}, \ s=r+1,\ldots,N,
\end{equation}
\item[iii)] $w^*$ and $z^*\in\RR$, with 
\begin{equation}
\label{eq:optimizer}
\textstyle z^*:=Z_0+\sum_{r=1}^{N}[y^*_r-Z_{r-1}],
\end{equation}
is an optimizer of the following problem
\begin{equation}
\label{eq:initialProblem}
\begin{array}{cl}
\min\limits_{w, z} & \gamma(w) +\varphi(z)\\[2ex]
\mathrm{s.t.} & Z_{0} \leq z \leq Z_{N},\\
& \bar{A}z + \bar{B}w\leq \bar{C}.
\end{array}
\end{equation}
\end{itemize}
\end{theorem}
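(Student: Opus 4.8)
The plan is to read \eqref{eq:equivalentProblem} as a \emph{split} reformulation of \eqref{eq:initialProblem}, in which the scalar $z$ has been replaced by $N$ variables $y_1,\dots,y_N$, one per subinterval, and recovered through $z=Z_0+\sum_{r=1}^{N}(y_r-Z_{r-1})$ as in \eqref{eq:optimizer}. The key observation is that on \emph{staircase} points — configurations for which there is a threshold $k$ with $y_r=Z_r$ for $r<k$, $y_k\in[Z_{k-1},Z_k]$, and $y_r=Z_{r-1}$ for $r>k$ — the recovery map gives $z=y_k$, the coupling constraint of \eqref{eq:equivalentProblem} is literally that of \eqref{eq:initialProblem}, and the telescoping sum $\varphi(Z_0)+\sum_{r=1}^{N}[\varphi(y_r)-\varphi(Z_{r-1})]$ collapses to exactly $\varphi(z)$. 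Hence \eqref{eq:equivalentProblem} restricted to staircase points coincides with \eqref{eq:initialProblem}, and the whole theorem reduces to proving that every optimizer of \eqref{eq:equivalentProblem} is a staircase point — which is precisely what parts i) and ii) assert.

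For i) I would argue by exchange. Suppose an optimizer $(w^*,y^*)$ has, for some $s<r$, both $y_s^*<Z_s$ (slack at the top of subinterval $s$) and $y_r^*>Z_{r-1}$ (slack at the bottom of subinterval $r$). Perturb $y_s\mapsto y_s^*+\xi$ and $y_r\mapsto y_r^*-\xi$: for small $\xi>0$ this remains in the box, it leaves $w^*$ and $z$ (hence the coupling constraint) untouched, and the objective changes only by $[\varphi(y_s^*+\xi)-\varphi(y_s^*)]+[\varphi(y_r^*-\xi)-\varphi(y_r^*)]$. Because $y_s^*<Z_s\le Z_{r-1}<y_r^*$, the breakpoint $Z_{r-1}$ lies strictly inside $[y_s^*,y_r^*]$, and the non-degeneracy hypothesis $h_{r-1}>0 \ \vee\ [f_{r-1}\ g_{r-1}]\neq[f_r\ g_r]$ then forces $\varphi$ to be non-affine on $[y_s^*,y_r^*]$; Lemma~\ref{lem:monotone} therefore yields a strict decrease, contradicting optimality. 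This rules out such a pair, which is exactly \eqref{eq:stickRight}. Part ii) is the mirror image: if $y_r^*=Z_{r-1}$ and $y_s^*>Z_{s-1}$ with $r<s$, add $\xi$ to $y_r^*$ and subtract it from $y_s^*$, use the breakpoint $Z_r\in(Z_{r-1},y_s^*)$ and the non-degeneracy at index $r$, and apply Lemma~\ref{lem:monotone} again to get \eqref{eq:stickLeft}.

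For iii): parts i) and ii) say that $\{r:y_r^*=Z_{r-1}\}$ is upward closed and that on its complement $y_s^*=Z_s$, so $y^*$ is a staircase point with some threshold $k$. By the telescoping identity, $z^*:=Z_0+\sum_{r}(y_r^*-Z_{r-1})$ equals $y_k^*\in[Z_0,Z_N]$, satisfies $\bar A z^*+\bar B w^*\le\bar C$, and the optimal value of \eqref{eq:equivalentProblem} equals $\gamma(w^*)+\varphi(z^*)$. Conversely, any feasible $(w,z)$ of \eqref{eq:initialProblem} lifts to the staircase point with $y_k=z$ (and the others pinned to endpoints), which is feasible for \eqref{eq:equivalentProblem} with objective $\gamma(w)+\varphi(z)$; hence the optimal value of \eqref{eq:equivalentProblem} is at most that of \eqref{eq:initialProblem}. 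Combining the two inequalities shows $(w^*,z^*)$ attains the optimum of \eqref{eq:initialProblem}. The one delicate point throughout is certifying that $\varphi$ really is non-affine on the swap interval in the exchange step: it is the gap $s<r$ that always drops at least one genuine breakpoint into the open interval, and the footnote's non-degeneracy assumption is exactly what prevents that breakpoint from being ``invisible'' (i.e. from both neighboring pieces being the same affine function). I expect this verification — making Lemma~\ref{lem:monotone} applicable — to be the main obstacle, and it is the reason the hypothesis on $h_r,f_r,g_r$ is stated the way it is.
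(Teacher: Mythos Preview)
Your proposal is correct and follows essentially the same route as the paper: an exchange argument via Lemma~\ref{lem:monotone} for i) and ii), then the staircase/telescoping identification for iii). The only differences are cosmetic---the paper proves i) and ii) for adjacent indices $s=r\pm 1$ and invokes ``analogously'' for the rest, whereas you treat general $s$ directly; and you are more explicit than the paper about why the non-degeneracy hypothesis forces $\varphi$ to be non-affine on the swap interval (the paper applies Lemma~\ref{lem:monotone} without spelling this out).
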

\begin{proof}

\begin{itemize}
\item[i)] It is enough to prove \eqref{eq:stickRight} for $s=r-1$, since the other cases ($s<r-1$) follow analogously. Assume the opposite, $\exists r\in\{2,\ldots,N\}$ such that $y^*_r>Z_{r-1}$ and $y^*_{r-1}<Z_{r-1}$. Choose any $\delta\in\left(0,\min\{y^*_r-Z_{r-1}, \ Z_{r-1}-y^*_{r-1}\}\right)$. From Lemma~\ref{lem:monotone} (by using $x_1=y^*_{r-1}$, $x_2=y^*_{r}$, $\xi=\delta$) it follows that $\varphi(y^*_{r-1}+\delta)+\varphi(y^*_{r}-\delta)< \varphi(y^*_{r-1})+\varphi(y^*_{r})$.
Therefore, one can construct a feasible point for \eqref{eq:equivalentProblem}: $w=w^*$, $y_{r-1}=y^*_{r-1}+\delta$, $y_{r}=y^*_{r}-\delta$, and $y_{i}=y^*_{i}$, $\forall i\in\{1,\ldots,r-2,r+1,\ldots,N\}$, that gives a smaller value of the objective function in \eqref{eq:equivalentProblem} than the optimizer does -- a contradiction. Therefore, it is proven $y_{s}^\star \ge Z_{s}$, but from bounds on $y_{s}$ it follows $y_{s}^\star = Z_{s}$.
\item[ii)] Similarly as in i). Let $y^*_r=Z_{r-1}$ and $y^*_{r+1}>Z_{r}$. From Lemma~\ref{lem:monotone}, for any $\delta\in\left(0,\min\{y^*_{r+1}-Z_{r}, \ Z_{r}-Z_{r-1}\}\right)$ follows $\varphi(y^*_{r}+\delta)+\varphi(y^*_{r+1}-\delta)< \varphi(y^*_{r})+\varphi(y^*_{r+1})$. Hence, a feasible point for \eqref{eq:equivalentProblem} exists: $w=w^*$, $y_{r}=y^*_{r}+\delta$, $y_{r+1}=y^*_{r+1}-\delta$, and $y_{i}=y^*_{i}$, $\forall i\in\{1,\ldots,r-1,r+2,\ldots,N\}$, that gives smaller value of the objective function in \eqref{eq:equivalentProblem} than the optimizer -- a contradiction. Therefore, it is proven $y_{s}^\star \le Z_{s-1}$, but from bounds on $y_{s}$ it follows $y_{s}^\star = Z_{s-1}$.
%
%
\item[iii)] Note that any $z\in[Z_{0}, \ Z_{N}]$ can be written as
\begin{equation}
\label{eq:connection}
\textstyle z=Z_0+\sum_{r=1}^{N}[y_r-Z_{r-1}],
\end{equation}
with appropriate (not necessarily unique) choice of values for new variables
\begin{equation}
\label{eq:yrconstr}
y_r\in\RR, \quad Z_{r-1}\leq y_r \leq Z_{r}, \quad r=1,\ldots,N.
\end{equation}
It can be easily verified that the following, particular choice of $y_r=y_r(z)$:
\begin{equation}
\label{eq:z2yr}
y_r=\left\{
\begin{array}{cl}
Z_{r-1} & \mathrm{if}~ z<Z_{r-1},\\
z & \mathrm{if}~ Z_{r-1}\leq z \leq Z_{r},\\
Z_{r} & \mathrm{if}~ z>Z_{r},
\end{array}
\right. \ r=1,\ldots,N,\vspace{-0.2cm}
\end{equation}
satisfies \eqref{eq:connection}-\eqref{eq:yrconstr} and also guarantees that
\begin{equation}
\label{eq:eqvCost}
\textstyle\varphi(z)=\varphi(Z_0)+\sum_{r=1}^{N}[\varphi(y_r)-\varphi(Z_{r-1})].
\end{equation}
Consequently, the problem \eqref{eq:equivalentProblem} is really a relaxation of the problem \eqref{eq:initialProblem} -- with omitted constraints \eqref{eq:z2yr}. However, from i) and ii) it follows that an optimizer of \eqref{eq:equivalentProblem} -- variables $y^*_r$, $r=1,\ldots,N$ -- behaves as if defined with \eqref{eq:z2yr}, with $y_r=y^*_r$ and $z=z^*$, where $z^*$ is given by \eqref{eq:optimizer}. Since this implies that \eqref{eq:eqvCost} holds (with $y_r=y^*_r$ and $z=z^*$), it follows that both problems achieve the same optimal cost. Therefore, $z^*$ is an optimal solution to \eqref{eq:initialProblem}.
\end{itemize}

\end{proof}

Note that problem \eqref{eq:evaluated_coordination_problem_formulation} can be written as \eqref{eq:initialProblem}. We have simply singled out one scalar convex piecewise quadratic function $\varphi(z)$ while the sum of the remaining functions from \eqref{eq:evaluated_coordination_problem_formulation} is replaced by a single convex piecewise quadratic function $\gamma(w)$ in \eqref{eq:initialProblem} to simplify the notation. Theorem~\ref{th:CPWQmonotoneOpt} proves that \eqref{eq:initialProblem} can be solved by solving its relaxation \eqref{eq:equivalentProblem}. Since Theorem~\ref{th:CPWQmonotoneOpt} holds for an arbitrary convex piecewise quadratic function $\gamma:\RR^{n_{\mathrm{w}}}\rightarrow\RR$, its results can be applied (in succession) to $\gamma$ that is the sum of scalar convex piecewise quadratic functions (which is the case in problem \eqref{eq:evaluated_coordination_problem_formulation}), i.e. the same relaxation that is mapped out in Theorem~\ref{th:CPWQmonotoneOpt} for a single scalar piecewise quadratic function $\varphi(z)$ can be repeated for the remaining functions hidden in $\gamma(w)$. By ignoring constant parts of $\wt{J}_i$, problem \eqref{eq:evaluated_coordination_problem_formulation} transforms to:
\begin{subequations}
	\label{eq:transformed_coordination_problem}
	\begin{alignat}{2}
	& \underset{\substack{y_{1,1},\ldots,y_{M,N_M}}}{\text{min}}
	& & \; \textstyle\sum\limits_{i=1}^{M}\sum\limits_{r=1}^{N_i}\frac{1}{2}h_{i,r} y_{i,r}^2+f_{i,r}y_{i,r}, \\
	&\hspace{0.76cm} \text{s.t.} 
	& & \; \begin{aligned}
	I_{i,r-1}\le y_{i,r}\le I_{i,r},\;\; r&=1,\ldots,N_i,\\
	i&=1,\ldots,M,
	\end{aligned}\\
	& & & \; \begin{aligned}
	\textstyle\sum\limits_{i=1}^{M} a_{i,j} \left[I_{i,0} + \sum\limits_{r=1}^{N_i} [y_{i,r} - I_{i,r-1}] \right] = b_j, \\
	\quad j=1,\ldots,m.
	\end{aligned}
	\end{alignat}
\end{subequations}
Finally, with a substitution of variables $\theta_{i,r}=y_{i,r}-I_{i,r-1}$ and by ignoring the constant part of the cost again, \eqref{eq:transformed_coordination_problem} becomes the following separable quadratic program with box constraints and coupling equality constraints:
\begin{equation}
\label{eq:oneshot_coordination_problem_formulation}
\begin{array}{cl}
\underset{\theta_{1,1},\ldots,\theta_{M,N_M}}{\text{min}}
&\hspace{-0.3cm} {\textstyle\sum\limits_{i=1}^{M}}{\textstyle\sum\limits_{r=1}^{\nr}} \frac{1}{2} h_{i,r} \theta_{i,r}^2 + (h_{i,r} I_{i,r-1} + f_{i,r}) \theta_{i,r}, \\[3ex]
\mathrm{s.t.} 
&\hspace{-0.3cm} 0 \leq \theta_{i,r} \leq I_{i,r} - I_{i,r-1}, \ r=1,\ldots,\nr, \\
&\hspace{-0.3cm} \phantom{0 \leq \theta_{i,r} \leq I_{i,r} - I_{i,r-1},} \ i=1,\ldots,M, \\
&\hspace{-0.3cm} {\textstyle\sum\limits_{i=1}^{M}} a_{i,j} \Big[I_{i,0} + {\sum\limits_{r=1}^{\nr}} \theta_{i,r} \Big] = b_j, j=1,\ldots,m.
\end{array}
\end{equation}
Therefore, from Theorem~\ref{th:CPWQmonotoneOpt} it follows that the optimizer, $\Theta_{i}^{\star}$, $i=1,\ldots,M$, for problem \eqref{eq:evaluated_coordination_problem_formulation} can be calculated as
\begin{equation}
\label{eq:theta_star}
\Theta_{i}^{\star} = I_{i,0} + \textstyle\sum\limits_{r=1}^{\nr} \theta_{i,r}^{\star}, \quad i=1,\ldots,M.\vspace{-0.3cm}
\end{equation}

\ifCLASSOPTIONcaptionsoff
  \newpage
\fi



\bibliographystyle{IEEEtran}
\bibliography{IEEEabrv,bibliography}\vspace{-1cm}
%
%
%

%

\begin{IEEEbiography}[{\includegraphics[width=1in,height=1.25in,clip,keepaspectratio]{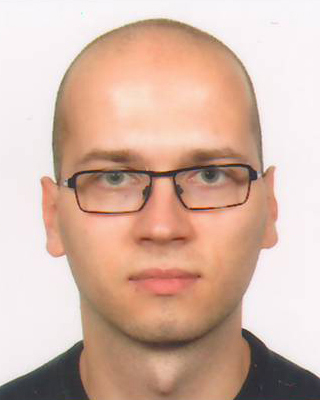}}]{Branimir Novoselnik} 
	
	received his M.Sc. and Ph.D. degrees, both in Electrical Engineering, from the University of Zagreb, Faculty of Electrical Engineering and Computing (UNIZG-FER), Croatia, in 2013 and 2018, respectively. Currently he is Research and Teaching Assistant at the Department of Control and Computer Engineering, UNIZG-FER, Croatia.
	
	His research interests include optimal control, mathematical programming, and model predictive control.\vspace{-1cm}
\end{IEEEbiography}

\begin{IEEEbiography}[{\includegraphics[width=1in,height=1.25in,clip,keepaspectratio]{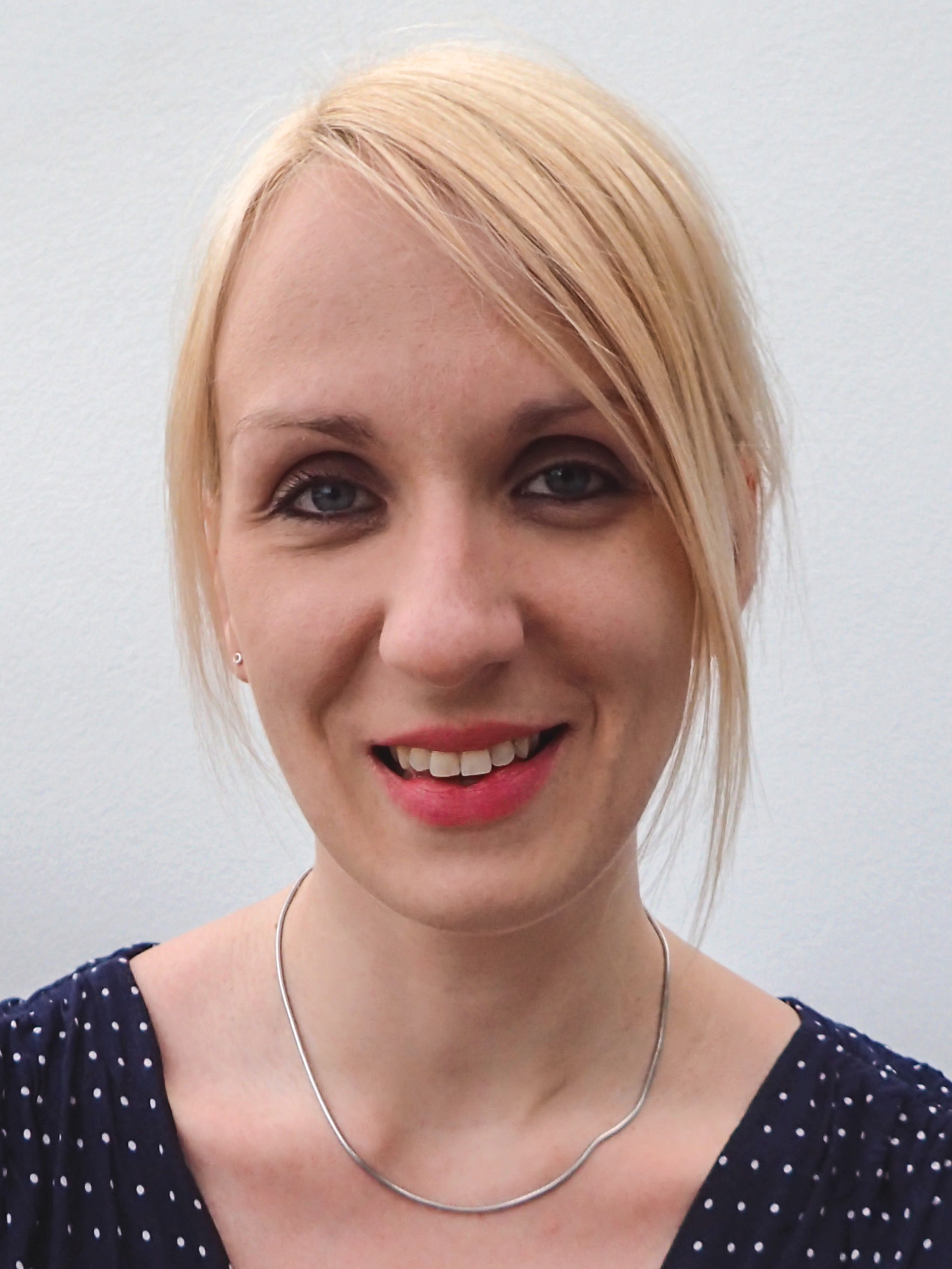}}]{Vedrana Spudi\'{c}}
received her M.Sc. and Ph.D. degrees in electrical engineering from University of Zagreb, in 2008 and 2012, respectively. Since 2014 she works at ABB Corporate Research Center in Baden-Dättwil, Switzerland, where she is currently leading the Power Conversion Systems research group.

Her research interests include mathematical optimization and optimal control applied to fast real time systems. Recently she is focusing on advanced control for power electronics applications.\vspace{-1cm}
\end{IEEEbiography}


\begin{IEEEbiography}[{\includegraphics[width=1in,height=1.25in,clip,keepaspectratio]{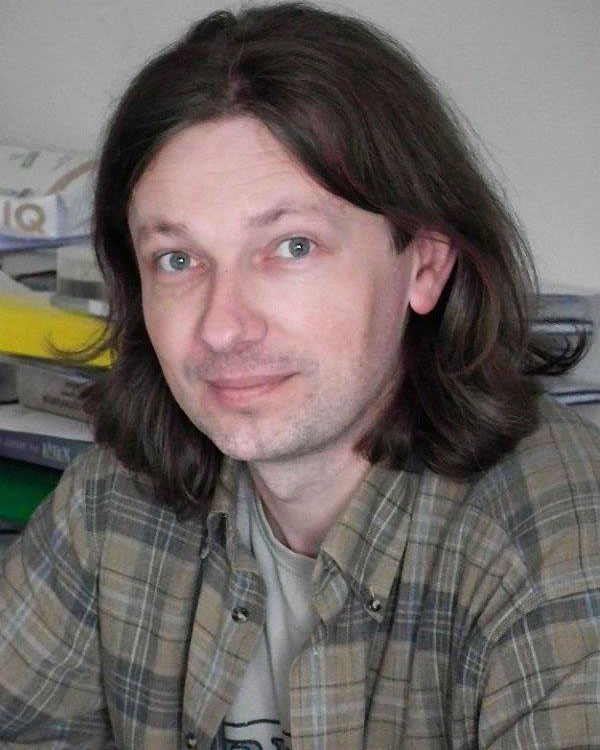}}]{Mato Baoti\'{c}}
	
	received the B.Sc. and M.Sc. degrees, both in Electrical Engineering, from the University of Zagreb, Faculty of Electrical Engineering and Computing (UNIZG-FER), Croatia, in 1997 and 2000, respectively, and the Ph.D. from the ETH Zurich, Switzerland, in 2005. Currently, he is Professor with the Department of Control and Computer Engineering, UNIZG-FER, Zagreb, Croatia. 
	
	His research interests include mathematical programming, hybrid systems, optimal control, and model predictive control.
\end{IEEEbiography}




\end{document}